\providecommand{\algorithmname}{Algorithm}
\theoremstyle{plain}
\newtheorem{thm}{\protect\theoremname}
\theoremstyle{plain}
\newtheorem{lem}[thm]{\protect\lemmaname}
\newenvironment{proof}[1][\protect\proofname]{\par
	\normalfont\topsep6\p@\@plus6\p@\relax
	\trivlist
	\itemindent\parindent
	\item[\hskip\labelsep\scshape #1]\ignorespaces
}{%
	\endtrivlist\@endpefalse
}
\providecommand{\proofname}{Proof}
\providecommand{\lemmaname}{Lemma}
\providecommand{\theoremname}{Theorem}
\begin{document}
\global\long\def\E{\mathbb{E}}%
\global\long\def\F{\mathcal{F}}%
\global\long\def\R{\mathbb{R}}%
\global\long\def\tn{\widetilde{\nabla}f}%
\global\long\def\hn{\widehat{\nabla}f}%
\global\long\def\n{\nabla f}%
\global\long\def\indicator{\mathbf{1}}%
\global\long\def\mf{f(x^{*})}%
\global\long\def\breg{\mathbf{D}_{\psi}}%
\global\long\def\dom{\mathcal{X}}%
\global\long\def\norm#1{\left\lVert #1\right\rVert }%

\title{High Probability Convergence of Clipped-SGD Under Heavy-tailed Noise }

\author{ 
Ta Duy Nguyen\thanks{Equal contribution. Department of Computer Science, Boston University,
\texttt{{taduy@bu.edu}.}}  \\ \and 
Thien Hang Nguyen\thanks{Equal contribution. Khoury College of Computer and Information Science, Northeastern University,
\texttt{{nguyen.thien@northeastern.edu}.}}  \\ \and
Alina Ene\thanks{Department of Computer Science, Boston University, \texttt{{aene@bu.edu}.}} \\ \and 
Huy L. Nguyen\thanks{Khoury College of Computer and Information Science, Northeastern University,
\texttt{{hu.nguyen@northeastern.edu}.}}}

\date{}
\maketitle
\allowdisplaybreaks
\begin{abstract}
While the convergence behaviors of stochastic gradient methods are
well understood \emph{in expectation}, there still exist many gaps
in the understanding of their convergence with \emph{high probability},
where the convergence rate has a logarithmic dependency on the desired
success probability parameter. In the \emph{heavy-tailed
	noise} setting, where the stochastic gradient noise only has bounded
$p$-th moments for some $p\in(1,2]$, existing works could only show
bounds \emph{in expectation} for a variant of stochastic gradient
descent (SGD) with clipped gradients, or high probability bounds in
special cases (such as $p=2$) or with extra assumptions (such as
the stochastic gradients having bounded non-central moments). In this
work, using a novel analysis framework, we present new and time-optimal
(up to logarithmic factors) \emph{high probability} convergence bounds
for SGD with clipping under heavy-tailed noise for both convex and
non-convex smooth objectives using only minimal assumptions. 
\end{abstract}

\section{Introduction}

Stochastic gradient descent (SGD) is at the heart of many stochastic
optimization algorithms in modern machine learning. Studying the properties
of SGD and the conditions for its convergence is therefore of
great interest. Many classical works \citep{ghadimi2013stochastic,nemirovski2009robust}
study SGD under the assumption that the stochastic gradient noise
follows a light-tailed distribution (e.g. sub-Gaussian) or has bounded
variance. A recent line of works on deep learning problems \citep{zhang2020adaptive,simsekli2019tail}
suggests that this assumption may not hold in practice. Instead, they
show that for a variety of modern learning tasks, such as training
attention models like BERT \citep{zhang2020adaptive} and
convolutional networks \citep{simsekli2019tail}, the gradient noise
distribution behaves closer to that of a \emph{heavy-tailed distribution,}
where the variance and other moments of the noise can be extremely
large or even unbounded. This mismatch between theory and practice
manifests itself through the sub-optimal performances of SGD in certain
settings such as the long-standing failure case of SGD in recurrent
neural networks \citep{pascanu2012understanding} or the dominance
of adaptive methods like Adam over SGD in more modern settings \citep{zhang2020adaptive}.

These challenges to SGD brought about by heavy-tailed noise do not stop at the practical level. In the light tails setting, well-behaving moments (including the variance) allows the use of a variety of concentration techniques to control the noisy iterates of SGD. However, with \emph{ heavy-tailed noise}, the convergence of SGD and its variants proves challenging to analyze as the nice behaviors of light-tailed noise models no longer hold, often requiring additional assumptions and/or modifications to the algorithm. These challenges are evident by the limited existing works in this setting.

In our work, we tackle the theoretical question of establishing high
probability convergence of stochastic first-order methods in the heavy-tailed
noise regime. More specifically, we analyze the heavy-tailed noise
model proposed by \citet{zhang2020adaptive} in which the gradient
noise has \emph{bounded $p$-th moment}:
\[
\E[\|\widehat{\nabla}f(x)-\nabla f(x)\|^{p}]\leq\sigma^{p},\]
for some $\sigma>0$, and $p\in(1,2].$ Here, $\widehat{\nabla}f(x)$
is a stochastic estimate to the true gradient $\nabla f(x)$ of the
function $f$ of interest at point $x$. When $p=2$, this recovers
the bounded variance setting, a common and well-studied assumption
for the analyses in expectation of stochastic gradient methods. However,
when $p<2$, \citet{zhang2020adaptive} show that there are scenarios
where SGD \emph{fails to converge} to a stationary point even in expectation.
There, the presence of large stochastic gradients is the main culprit
for the non-convergence of SGD. The authors then show that SGD with
appropriate \emph{gradient clipping }(or \emph{clipped-SGD}) not only
alleviates this problem but also attains the \emph{optimal} convergence
rate in expectation for non-convex smooth objectives, matching their
lower bound. 

However, convergence in expectation is often unsatisfactory due to
its implication that the convergence is only guaranteed when one can
perform multiple runs of the algorithm, whereas typically in machine
learning problems performing multiple runs can incur significant computational
and statistical costs \citep{harvey2019tight,madden2020high,davis2021low}.
Hence, a high probability convergence guarantee, where the convergence
rate has a logarithmic dependency on the success probability, is highly
desirable.

In the heavy-tailed noise regime, where the stochastic gradient noise
has bounded $p$-th moment for some $p\in(1,2]$, results on high
probability convergence of SGD variants are limited. To the best of
our knowledge, \citet{cutkosky2021high} is the only work that provides
high probability bounds for the convergence of clipped-SGD \emph{with
momentum} in the non-convex setting, but relying on the additional
assumption that the stochastic gradients are well behaved. More precisely,
this work assumes that the stochastic gradients have uniformly bounded
\emph{non-central} $p$-th moment: $\E[\|\widehat{\nabla}f(x)^{p}\|]\leq\sigma^{p}$. However, this is a strong assumption that implies the true gradients
are bounded and excludes important objective functions such as quadratic
functions. This work has another drawback of not achieving noise-adaptive
rates that improve towards the deterministic rate as the amount of
noise decreases. Other recent works by \citet{gorbunov2020stochastic}
and \citet{nazin2019algorithms} show high probability convergence
rates for smooth convex optimization in the special case of bounded
variance ($p=2$), with the latter having to assume that the optimization
domain has bounded diameter. Thus, establishing high probability convergence
guarantees under minimal assumptions remains open in both the convex
and non-convex regimes.

\subsection{Contributions}

Our work fills in the aforementioned gaps in the study of high probability
convergence of stochastic gradient methods with heavy-tailed stochastic
gradient noise with bounded $p$-th moment for
both convex and non-convex optimization.

$\bullet$ We show that a simple clipping strategy with appropriate choices
of step sizes and clipping parameters is sufficient to ensure   convergence with high probability
 for both convex and non-convex optimization.
Our clipping strategy does not employ  momentum, yet still achieves
optimal dependency on the time horizon without any assumptions on
the gradients or stochastic gradients.

$\bullet$ In the convex setting, we provide the first high probability convergence
rate for clipped-SGD under noises with bounded $p$-th moment for
 $p\in(1,2]$. Our result generalizes the result from \citet{gorbunov2020stochastic},
where high probability convergence rate is only shown for $p=2$.
Our convergence guarantee is $O(T^{\frac{1-p}{p}})$, which is time-optimal
(up to logarithmic factors) and matches the lower bounds proven in
\citet{raginsky2009information,vural2022mirror}. 

$\bullet$ In the non-convex setting, we provide a high probability bound for
the convergence of clipped-SGD to a stationary point. Our convergence
guarantee is $O(T^{\frac{2-2p}{3p-2}})$, which is also time-optimal
(up to logarithmic factors) and matches the lower bound in \citet{zhang2020adaptive}.
This result complements the convergence in expectation of clipped-SGD
provided by \citet{zhang2020adaptive}. In contrast to the prior work
of \citet{cutkosky2021high} that strongly relies on the assumption
that the stochastic gradients have bounded non-central moments, our
analysis does not make any assumptions on the gradients or the stochastic
gradients.

$\bullet$ Our work builds upon the line of work on high probability bounds for
stochastic gradient methods that we discuss in more detail in Section
\ref{subsec:related-works}. Our approach and techniques apply to
the challenging setting of heavy-tailed noise with bounded $p$-th moment,
which extends beyond the light-tailed and bounded variance noise settings
 considered in prior works. Our approach is general and can be applied
to both convex and non-convex optimization, with minimal assumptions.

\subsection{Our techniques }

Our work builds upon the line of works that analyze the high-probability
convergence of SGD methods that we discuss in more detail in Section
\ref{subsec:related-works}. Here, we highlight some of the main challenges in
the heavy-tailed setting and describe our key techniques
for overcoming them.

Our work departs from the most closely related works \citep{gorbunov2020stochastic,cutkosky2021high}
in several key aspects.  While our approach is built upon \citep{gorbunov2020stochastic}, this work only applies
to the special case of bounded variance ($p=2$) in the convex setting.
For the non-convex setting, \citet{cutkosky2021high} rely
on the use of momentum in their analysis to obtain the right convergence along with additional assumptions on the stochastic gradients. 

We highlight three challenges that arise when trying to extend the
aforementioned works. For concreteness, let us denote by $\tn(x)$ the clipped gradient,
which is a biased estimate of the true gradient $\nabla f(x)$ of
the function $f$ of interest at point $x$. As noted earlier, clipping
is essential for mitigating the effects of heavy-tailed noise, but
it introduces bias in the gradient estimate. The first challenge is
analyzing both the bias $\|\E[\tn(x)]-\nabla f(x)\|$
and the variance $\E[\| \tn(x)-\E[\tn(x)]\| ^{2}]$
of the clipped gradient estimation without relying on the bounded
variance assumption used in the prior work \citep{gorbunov2020stochastic}.
Here, new techniques are needed to handle the general setting
of bounded $p$-th moment. The second challenge arises in obtaining the
optimal convergence rate in the non-convex setting without relying
on the addition of momentum for variance reduction as well as strong
assumptions on the gradients and the stochastic gradients. Simply
applying the techniques developed in the convex setting will not give
the optimal convergence rate, so new insights are needed. The third
challenge arises in the derivation of the clipping parameters that
lead to the optimal convergence rates, where our analysis provides
a key insight: the appropriate choices highly depend on $p$,
 which differ significantly between the convex and non-convex setting.
We now discuss some of the key techniques to overcome
these challenges.

\noindent \textbf{Inductive argument:} At the center of our approach is an induction
on the number of iterations $N$ showing that, with high probability,
all of the stochastic quantities of interest that arise in the first
$N$ iterations are well-controlled. In the convex setting, similarly
to \citep{gorbunov2020stochastic}, a key component is the analysis
of the distances between the iterates and the optimum, and we show
that these distances are all bounded by a constant with high probability.
In the non-convex setting, the induction differs significantly: instead
of the iterates' distances to the optimum, we show a constant bound
for the function value gaps.

Our inductive argument is a significant departure from the prior work
\citep{cutkosky2021high} for non-convex optimization. Notably, the
strong assumption that the non-central $p$-th moments of the stochastic
gradients are bounded that is employed in \citet{cutkosky2021high}
implies that the true gradients are also bounded. Due to these assumptions,
Freedman's inequality can be directly applied without significant concerns,
leading to an arguably much simpler analysis. Without these strong
assumptions, there is no longer any a priori bound on the gradients,
and the conditions required by Freedman's inequality are not a priori
satisfied. Instead, we show that the true gradients are bounded as
part of our inductive argument. Our techniques for bounding the relevant
stochastic quantities are general and could be applied to the analysis
of a broader class of algorithms, including those with momentum, which
we leave as future work.

\noindent\textbf{Analysis of the bias-variance trade-off of clipped gradients:}
A key tool in our analysis is Lemma \ref{lem:Gorbunov-F5-p}, where
we derive appropriate bounds for the bias and variance of the clipped
stochastic gradient $\tn(x)$.  The lemma quantifies how the clipping
parameter controls the trade-off between the bias of
the clipped gradient and its variance: a smaller clipping parameter results in a smaller variance but also 
a higher bias. Contrast this to vanilla
SGD under heavy-tailed noise: while its estimate is unbiased, it suffers
from too high a variance, leading to non-convergence under certain
conditions \citep{zhang2020adaptive}. The ability for clipped gradients
to trade variance for bias allows for clipped-SGD to converge in this
regime. Balancing this trade-off appropriately is central to our analysis.

Also, note that these bounds hold only under the condition that the
true gradient norm $\| \nabla f(x)\| $ is not too
large. Here, the induction helps us overcome this challenge: by conditioning
on the inductive hypothesis, the bounds for the bias and variance
of $\tn(x)$ hold. Lemma \ref{lem:Gorbunov-F5-p} is also important
for arguing about the probability of the event that the conditions
for applying Freedman's inequality are satisfied. An insight from
this analysis is that most often we do not need to clip the gradients,
and thus the clipping algorithm exhibits the same behaviors as SGD
with high probability.

\noindent\textbf{Application of Freedman's inequality:} A primary tool for
establishing bounds on the iterates is Freedman's inequality for the
sum of bounded martingale difference sequences:
\begin{lem}[Freedman's inequality]
\label{thm:freedman}Let $(X_{t})_{t\ge1}$ be a martingale difference
sequence. Assume that there exists a constant $c$ such that $\left|X_{t}\right|\le c$
almost surely for all $t\ge1$ and define $\sigma_{t}^{2}=\E\left[X_{t}^{2}\mid X_{t-1},\dots,X_{1}\right]$.
Then for all $b>0$, $F>0$ and $T\ge1$ 
\begin{align*}
\Pr\left[\left|\sum_{t=1}^{T}X_{t}\right|>b\text{ and }\sum_{t=1}^{T}\sigma_{t}^{2}\le F\right] & \le2\exp\left(-\frac{b^{2}}{2F+2cb/3}\right).
\end{align*}
\end{lem}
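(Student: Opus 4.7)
The plan is to combine the standard exponential moment (Chernoff) technique with a stopping time argument to handle the constraint $\sum_t \sigma_t^2 \le F$ inside the probability. Write $S_t = \sum_{s=1}^t X_s$ and $V_t = \sum_{s=1}^t \sigma_s^2$, and let $\F_t = \sigma(X_1,\dots,X_t)$. Since $\sigma_s^2 = \E[X_s^2 \mid \F_{s-1}]$ is $\F_{s-1}$-measurable, $V_t$ is predictable, which will be crucial.

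First I would establish a one-step conditional moment-generating function bound: for every $\lambda > 0$,
\[
\E\left[e^{\lambda X_t} \mid \F_{t-1}\right] \;\le\; \exp\!\left(\sigma_t^2\,\phi(\lambda)\right), \qquad \phi(\lambda) \;=\; \frac{e^{\lambda c} - 1 - \lambda c}{c^{2}}.
\]
This follows from expanding $e^{\lambda X_t} = 1 + \lambda X_t + \sum_{k\ge 2}(\lambda X_t)^k/k!$, using $\E[X_t\mid\F_{t-1}]=0$, and the bound $|X_t|^k \le c^{k-2} X_t^2$ coming from $|X_t|\le c$. With this in hand, I would show that $M_t = \exp\!\left(\lambda S_t - \phi(\lambda)\,V_t\right)$ is a nonnegative supermartingale: because $V_t - V_{t-1} = \sigma_t^2$ is $\F_{t-1}$-measurable, one gets $\E[M_t\mid\F_{t-1}] \le M_{t-1}$, hence $\E[M_T] \le 1$.

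To convert this into a bound on the desired event, I would introduce the stopping time $\tau = \min\{t \ge 1 : V_{t+1} > F\} \wedge T$, which is a stopping time by predictability of $V_{t+1}$. By construction, $V_\tau \le F$ deterministically, and on the event $\{V_T \le F\}$ the monotonicity of $V_t$ forces $\tau = T$ and hence $S_\tau = S_T$. Consequently, on the event $\{S_T > b\}\cap\{V_T\le F\}$ we have $M_\tau \ge \exp(\lambda b - \phi(\lambda) F)$. Combining optional stopping ($\E[M_\tau]\le 1$) with Markov's inequality yields
\[
\Pr\left(S_T > b,\; V_T \le F\right) \;\le\; \exp\!\left(-\lambda b + \phi(\lambda) F\right).
\]

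Finally, to extract the Bernstein form I would apply the elementary inequality $\phi(\lambda) \le \tfrac{\lambda^2/2}{1 - \lambda c/3}$, valid for $\lambda c < 3$, and then optimize by taking $\lambda = b/(F + cb/3)$, producing the one-sided bound $\exp(-b^2/(2F + 2cb/3))$. Applying the identical argument to the martingale difference sequence $(-X_t)$, which satisfies the same hypotheses, and taking a union bound yields the prefactor of $2$. The main subtlety I anticipate is the careful setup around $\tau$: one must use predictability of $V_{t+1}$ to make $\tau$ a stopping time, confirm the pathwise bound $V_\tau \le F$ without any probabilistic qualifier, and verify the identification $\tau = T$ on the good event, since these three facts together are what let the supermartingale inequality on $M_\tau$ translate into the stated control on $(S_T, V_T)$.
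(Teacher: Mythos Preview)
The paper does not prove this lemma at all: Freedman's inequality is stated as a standard tool in the techniques overview and then invoked repeatedly in the analyses, but no proof is given anywhere in the body or the appendix. So there is no ``paper's own proof'' to compare against.

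Your proposal is the standard and correct route to Freedman's inequality: the one-step conditional MGF bound $\E[e^{\lambda X_t}\mid\F_{t-1}]\le\exp(\sigma_t^2\phi(\lambda))$ via Bennett's lemma, the exponential supermartingale $M_t=\exp(\lambda S_t-\phi(\lambda)V_t)$, optional stopping at a predictable stopping time that caps the quadratic variation, the Bernstein-type simplification $\phi(\lambda)\le \tfrac{\lambda^2/2}{1-\lambda c/3}$, optimization in $\lambda$, and a union bound with $(-X_t)$ for the two-sided statement. One small remark: your claim that $V_\tau\le F$ holds ``deterministically'' is slightly off with the stopping time starting at $t\ge 1$ (if $V_1>F$ already, $\tau=1$ and $V_\tau=V_1$ need not be $\le F$); the clean fix is to let $\tau$ start at $t\ge 0$ with $V_0=0$. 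But this is cosmetic here, since your actual Markov step only uses $V_\tau\le F$ on the event $\{V_T\le F\}$, where it follows from $\tau=T$.
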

We use Freedman's inequality to analyze the deviation term $\tn(x)-\E[\tn(x)]$
of the clipped stochastic gradients appearing in our analysis.  In order to apply Freedman's
inequality, the elements of the martingale difference sequence have
to be bounded. Unfortunately, this necessary condition fails to hold
for one of the sequences obtained from the standard analysis of
SGD without further assumptions. In \citep{cutkosky2021high}, the
necessary conditions for the application of Freedman's inequality
are established via the strong assumption that the non-central $p$-th
moments of the stochastic gradients are bounded, allowing for a direct
application of Freedman's inequality. On the other hand, we apply
Freedman's on a surrogate sequence that is bounded. Then, utilizing
our inductive hypothesis and Lemma \ref{lem:Gorbunov-F5-p}, we argue
that our original sequence is the same as that surrogate sequence
(and hence also controlled by Freedman's) with high probability.

\noindent\textbf{Separation between the convex and non-convex setting:} Since
the lower bounds for the convex \citep{vural2022mirror} and non-convex
\citep{zhang2020adaptive} settings are different, there must exist
a fundamental difference between minimizing a convex function and
finding a stationary point of a non-convex function under heavy-tailed noise. 
Our analysis
provides insights into this difference at a technical level. In the convex setting, the goal is to upper bound the function value gaps, and the main stochastic error involves the dot product of the distance from the iterates to the optimum and the gradient estimate error. In contrast, in the non-convex setting, the goal is to upper bound the squared norms of the gradients, which is a weaker requirement than bounding the function value gaps. In this latter setting, the main stochastic error involves a different quantity: the dot product of the true gradient and the gradient estimate error. A natural approach is to apply the same strategy from the convex setting to the non-convex setting. However, this
leads to a sub-optimal convergence rate for the non-convex setting
that does not match the lower bound. The key to improving the bound is to realize that,
in the non-convex setting, we can use part of the norm squared of
the gradients to absorb the dot product containing the estimation error. We discuss this in more details in Section \ref{sec:Non-convex}.

We also highlight another key difference between our work and that
of \citet{cutkosky2021high} for non-convex objectives. The latter
heavily relies on the use of momentum to obtain the optimal convergence
rate. In contrast, by leveraging the gain and loss in the function
analysis, as described above, we achieve the right convergence
rate for the simpler SGD variant, clipped-SGD.

\subsection{Related works}

\label{subsec:related-works}

\textbf{Heavy-tailed noise, gradient clipping, and high-probability
convergence:} Clipped-SGD has been long utilized to ameliorate gradient
explosion problems \citep{pascanu2012understanding}. Recently, a
motivation for the development of clipped-SGD methods is that, as
pointed out by \citet{zhang2020adaptive}, vanilla SGD does not converge,
even in expectation, when the gradient noise has infinite variance.
Further, \citet{csimcsekli2019heavy,simsekli2019tail,zhang2020adaptive,gurbuzbalaban2021heavy}
suggest that in practical deep learning problems, gradient noises
exhibit heavy tails. In particular, \citet{zhang2020adaptive}
provide empirical evidence for that in attention models and prove the convergence
in expectation of clipped-SGD for strongly convex and nonconvex smooth
objectives. 

The convergence in expectation of vanilla SGD has been studied
for noises with bounded variance in the works \citep{ghadimi2013stochastic,nemirovski2009robust,khaled2020better}.
Using different gradient clipping strategies, \citet{nazin2019algorithms,gorbunov2020stochastic}
show that, in the convex setting, their variants of SGD converge
with high probability in the bounded variance regime ($p=2)$. The former
work requires a bounded domain assumption while the latter is for unconstrained optimization and requires knowledge of the initial
distance. More recently, for the problem of finding stationary points
of non-convex objectives, \citet{cutkosky2021high} demonstrate the
convergence with high probability of another variant of clipped-SGD
with momentum, under an additional assumption that the non-central
$p$-th moments of the stochastic gradients are also bounded -- a
relatively strong assumption that implies the true gradients are bounded,
which excludes objectives such as quadratic functions. This assumption
is key to employing Freedman's inequality in their analysis. Moreover,
\citet{cutkosky2021high} rely on momentum to
obtain the optimal convergence rate. By comparison, our analysis utilizes
an induction argument as well as the bias-variance trade-off to obtain
the optimal rate without momentum or additional assumptions.

The question of under which additional
condition(s)  vanilla SGD can converge with heavy-tailed noise has
also been studied. \citet{wang2021convergence} show that vanilla
SGD converges\emph{ }in expectation for a special type of strongly
convex functions under heavy-tailed noise. In a similar setting,
\citet{vural2022mirror} provide an in expectation\emph{ }convergence
rate for stochastic mirror descent for strongly convex problems with
compact domain under the stronger  bounded non-central $p$-moments stochastic
gradient assumption.

In our work, we analyze the convergence with \emph{high probability}
of clipped-SGD  under heavy-tailed noise. Our approach is applicable to both
convex and non-convex regimes using only the standard assumptions,
while achieving optimal rates (up to logarithmic factors)
in both cases (see \citet{raginsky2009information,vural2022mirror}
for lower bounds in the convex setting). Our techniques are developed
based on the work on convex objectives by \citet{gorbunov2020stochastic}
for noises with bounded variance, extending it to noises with bounded
$p$-th moment in both convex and non-convex regimes.

\textbf{High-probability convergence for light-tailed noise:} With
light-tailed noises, gradient clipping is not necessary for stochastic
gradient methods to achieve high probability bounds. Convergence in
high probability for stochastic mirror descent and stochastic gradient
descent with sub-Gaussian noise has been established in the works
by \citet{nemirovski2009robust,lan2012optimal,kakade2008generalization,rakhlin2011making,hazan2014beyond,harvey2019tight,dvurechensky2016stochastic}.
These works require that the domain has bounded diameter (or Bregman
diameter in the case of mirror descent) or assume strong convexity
of the objective. In the nonconvex setting, \citet{li2020high}, \citet{madden2020high}
and \citet{li2022high} show the convergence of various variants of
SGD for sub-Gaussian noises and generalize to the family of sub-Weibull
distributions, which have lighter tails than the case we consider
in this work.

\section{Preliminaries \label{sec:Preliminaries}}
We consider the unconstrained problem of minimizing a differentiable
function $f:\R^{d}\to\R$ over $\R^{d}$. Throughout the paper, we
make the following standard assumptions:\\
\textbf{(1) Unbiased estimator}: Instead of having direct access to
$f$, we assume that our algorithm is allowed to query $f$ via
a stochastic first-order oracle that returns a history-independent,
unbiased gradient estimator $\hn(x)$ of $\nabla f(x)$ for any
$x\in\R^{d}$. That is, conditioned on the history and the queried
point $x$, we have $\E[\hn(x)\mid x]=\n(x)$.\\
\textbf{(2) Bounded $p$-th moment noise}: We assume that for any
$x\in\R^{d}$, $\hn(x)$ satisfies  
$
\E[\| \hn(x)-\n(x)\| ^{p}\mid x] \le\sigma^{p},$
 for some $\sigma>0$ and some $p \in (1,2]$. This is commonly referred
to as \emph{heavy-tailed} noise, as opposed to light-tailed noise
such as those that are distributed according to sub-Gaussian or sub-Exponential
distributions.\\
\textbf{(3) $L$-smoothness}: We consider the class of $L$-smooth
functions. We say that $f$ is $L$-smooth if its gradients are $L$-Lipschitz,
i.e. for all $x,y\in\R^{d}$, $
\left\Vert \nabla f(x)-\nabla f(y)\right\Vert \le L\left\Vert x-y\right\Vert.
$
Throughout the paper, $\left\Vert \cdot\right\Vert $ denotes the
$\ell_{2}$ norm. We will utilize the following quadratic upperbound
for $L$-smooth functions: for all $x,y\in\R^{d}$, $
f(y)\le f(x)+\left\langle \nabla f(x),y-x\right\rangle +\frac{L}{2}\left\Vert y-x\right\Vert ^{2}.$\\
\textbf{(4.1) Existence of a minimizer}: In the convex setting, we
assume that there exists a minimizer $x^*$ of $f$: $x^{*}\in\arg\min_{x\in\R^{d}}f(x)$. We let $f^{*}:=f(x^{*})$.\\
\textbf{(4.2) Finite lower bound for the function value}: In the non-convex
setting, we assume that there exists $f^{*}$ such that $-\infty<f^{*}\le\inf_{x\in\R^{d}}f(x)$. This assumption is used in both the lower bound and
upper bound in \citet{zhang2020adaptive}.

\section{Algorithm and noise bounds \label{sec:Algorithm}}

\begin{algorithm}
\caption{Clipped-SGD}
\label{alg:clipped-sgd}

Parameters: initial point $x_{1}$, step sizes $\left\{ \eta_{t}\right\} $,
clipping parameters $\left\{ \lambda_{t}\right\} $

for $t=1$ to $T$ do

$\quad$ $\tn(x_{t})=\min\left\{ 1,\frac{\lambda_{t}}{\left\Vert \hn(x_{t})\right\Vert }\right\} \hn(x_{t})$

$\quad$ $x_{t+1}=x_{t}-\eta_{t}\tn(x_{t})$
\end{algorithm}

SGD with gradient clipping \citep{zhang2020adaptive,gorbunov2020stochastic} is presented in Algorithm \ref{alg:clipped-sgd}.
In each iteration, the algorithm performs the standard SGD update using $\tn(x_{t})$
with step size $\eta_{t}$, where $\tn(x_{t})$ clips the stochastic
gradient $\hn(x_{t})$ according to the clipping parameter
$\lambda_{t}$:
\begin{equation}
\tn(x_{t}):=\min\left\{ 1,\frac{\lambda_{t}}{\left\Vert \hn(x_{t})\right\Vert }\right\} \hn(x_{t}).\label{eq:clipping-formula}
\end{equation}
The advantage of using $\tn(x_{t})$ instead of $\hn(x_{t})$ is that
$\tn(x_{t})$ has a bounded norm and a lower variance, allowing us to mitigate the
effects of large stochastic gradients on the convergence of the algorithm.
However, the clipped gradient is a biased estimate
of the true gradient $\n(x)$. Therefore, we will have to handle the
error caused by $\tn(x_{t})$ more carefully. In Lemma \ref{lem:Gorbunov-F5-p},
the clipping parameter $\lambda_t$ quantifies this bias-variance trade-off for $\tn(x_{t})$, and an appropriate balancing will be central
to our analysis of the clipped-SGD algorithm. We note that Lemma \ref{lem:Gorbunov-F5-p}
holds regardless of the convexity or smoothness of the objective $f$.

For simplicity, in the rest of the paper, we define the following
notations. For $t\ge1$, we define the function value gap $\Delta_{t}:=f(x_{t})-f^{*}$.
In the convex setting, we let $R_{t}:=\left\Vert x_{t}-x^{*}\right\Vert $
be the distance from the point $x_{t}$ and the optimum $x^{*}$.
We will also use $\E_{t}\left[\cdot\right]$ to denote the expectation
conditioned on all the randomness up to (but not including) iteration
$t$. Finally, let
\begin{align}
 & \theta_{t}:=\tn(x_{t})-\nabla f(x_{t});\quad\theta_{t}^{u}:=\tn(x_{t})-\E_{t}\left[\tn(x_{t})\right];\quad\theta_{t}^{b}:=\E_{t}\left[\tn(x_{t})\right]-\nabla f(x_{t}).\label{eq:notations-for-grad-est}
\end{align}
Note that $\theta_{t}^{u}+\theta_{t}^{b}=\theta_{t}.$ With these
notations, we present Lemma \ref{lem:Gorbunov-F5-p} below (proof in Section \ref{sec:Proof-for-sec-algorithm}). This lemma extends Lemma F5 from
\citep{gorbunov2020stochastic} and Lemma 10 from \citep{zhang2020adaptive}.
\begin{lem}
\label{lem:Gorbunov-F5-p}For $t\ge1$, for $\hn(x_{t})$ satisfying
assumption (2) and $\tn(x_{t})$ defined in (\ref{eq:clipping-formula}),
we have

\begin{align}
\left\Vert \theta_{t}^{u}\right\Vert =\left\Vert \tn(x_{t})-\E_{t}\left[\tn(x_{t})\right]\right\Vert  & \le2\lambda_{t}\label{eq:1-p}
\end{align}
Furthermore, if $\left\Vert \nabla f(x_{t})\right\Vert \le\frac{\lambda_{t}}{2}$
then
\begin{align}
\left\Vert \theta_{t}^{b}\right\Vert = & \left\Vert \E_{t}\left[\tn(x_{t})\right]-\nabla f(x_{t})\right\Vert \le4\sigma^{p}\lambda_{t}^{1-p};\label{eq:2-p}\\
\E_{t}\left[\left\Vert \theta_{t}^{u}\right\Vert ^{2}\right]= & \E_{t}\left[\left\Vert \tn(x_{t})-\E_{t}\left[\tn(x_{t})\right]\right\Vert ^{2}\right]\le16\sigma^{p}\lambda_{t}^{2-p}.\label{eq:3-p}
\end{align}
\end{lem}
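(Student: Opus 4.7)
The plan is to exploit two very simple structural facts about the clipping operator and then trace their consequences through each of the three bounds in turn: (i) $\tn(x_{t})$ has norm at most $\lambda_{t}$ by construction, and (ii) the map $y\mapsto\min\{1,\lambda_{t}/\|y\|\}\,y$ is exactly the Euclidean projection onto the ball $B(0,\lambda_{t})$, so it is nonexpansive and fixes every point inside that ball. Fact (i) handles (\ref{eq:1-p}) immediately: by Jensen, $\|\E_{t}[\tn(x_{t})]\|\le\E_{t}[\|\tn(x_{t})\|]\le\lambda_{t}$, and then the triangle inequality yields $\|\theta_{t}^{u}\|\le 2\lambda_{t}$. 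Note that this bound is unconditional, as it should be.

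For (\ref{eq:2-p}), I would first use the unbiasedness of $\hn(x_{t})$ to rewrite $\theta_{t}^{b}=\E_{t}[\tn(x_{t})-\hn(x_{t})]$ and apply Jensen, so it suffices to bound $\E_{t}[\|\tn(x_{t})-\hn(x_{t})\|]$. On the event $\{\|\hn(x_{t})\|\le\lambda_{t}\}$ this difference vanishes, and on $\{\|\hn(x_{t})\|>\lambda_{t}\}$ a direct computation gives $\|\tn(x_{t})-\hn(x_{t})\|=\|\hn(x_{t})\|-\lambda_{t}\le\|\hn(x_{t})\|$. Here the hypothesis $\|\n(x_{t})\|\le\lambda_{t}/2$ enters in two ways: first, it forces the event $\{\|\hn(x_{t})\|>\lambda_{t}\}$ to be contained in $\{\|\hn(x_{t})-\n(x_{t})\|>\lambda_{t}/2\}$ by the reverse triangle inequality, and second, on that event the same reverse triangle inequality gives $\|\hn(x_{t})\|\le 2\|\hn(x_{t})-\n(x_{t})\|$. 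Combining these with the truncation bound $a\,\indicator\{a>\lambda_{t}/2\}\le a^{p}(\lambda_{t}/2)^{1-p}$ (valid since $p\ge 1$) reduces the problem to the $p$-th moment assumption and yields $\|\theta_{t}^{b}\|\le 2^{p}\sigma^{p}\lambda_{t}^{1-p}\le 4\sigma^{p}\lambda_{t}^{1-p}$.

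For (\ref{eq:3-p}), I would first use the fact that $\E_{t}[\tn(x_{t})]$ minimizes $c\mapsto\E_{t}[\|\tn(x_{t})-c\|^{2}]$ to replace the centering by $\n(x_{t})$, so $\E_{t}[\|\theta_{t}^{u}\|^{2}]\le\E_{t}[\|\tn(x_{t})-\n(x_{t})\|^{2}]$. Since $\|\n(x_{t})\|\le\lambda_{t}/2\le\lambda_{t}$, $\n(x_{t})$ is fixed by the projection onto $B(0,\lambda_{t})$, and nonexpansiveness gives the pointwise bound $\|\tn(x_{t})-\n(x_{t})\|\le\|\hn(x_{t})-\n(x_{t})\|$. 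The triangle inequality also furnishes the deterministic bound $\|\tn(x_{t})-\n(x_{t})\|\le\lambda_{t}+\lambda_{t}/2=3\lambda_{t}/2$. Interpolating via $a^{2}=a^{2-p}\cdot a^{p}\le(3\lambda_{t}/2)^{2-p}\,a^{p}$ (using $p\le 2$) and then taking conditional expectation invokes the $p$-th moment assumption and gives a constant of $(3/2)^{2-p}\le 3/2$, comfortably absorbed into the stated $16$.

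The one genuinely delicate step is the bookkeeping in (\ref{eq:2-p}): the hypothesis $\|\n(x_{t})\|\le\lambda_{t}/2$ is what lets us translate the event $\{\|\hn(x_{t})\|>\lambda_{t}\}$, on which the bias actually accrues, into a tail event for the noise $\hn(x_{t})-\n(x_{t})$, which is what the $p$-th moment assumption controls. Everything else is either Jensen, the triangle inequality, nonexpansiveness of projection, or a routine Markov-style truncation, and the constants $4$ and $16$ in the statement are deliberately loose to keep the downstream calculations clean.
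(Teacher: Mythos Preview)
Your proof is correct and, for parts (\ref{eq:2-p}) and (\ref{eq:3-p}), genuinely different from the paper's argument. For (\ref{eq:1-p}) you and the paper do the same thing. For (\ref{eq:2-p}), the paper introduces the indicators $\chi_{t}=\indicator\{\|\hn(x_{t})\|>\lambda_{t}\}$ and $\mu_{t}=\indicator\{\|\hn(x_{t})-\n(x_{t})\|>\lambda_{t}/2\}$, bounds $\E_{t}[\|\hn(x_{t})\|\mu_{t}]$ via the triangle inequality and then H\"older together with Markov on $\E_{t}[\mu_{t}]$; you instead bound $\|\hn(x_{t})\|\le 2\|\hn(x_{t})-\n(x_{t})\|$ directly on the relevant event and use the one-line truncation $a\,\indicator\{a>c\}\le a^{p}c^{1-p}$. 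Both land on the same constant $2^{p}$, but your route is shorter. For (\ref{eq:3-p}), the paper again splits on $\chi_{t}$, applies a Minkowski-type bound in $L^{2}$, and tracks constants separately on the clipped and unclipped parts, arriving at the stated $16$. Your observation that clipping is the Euclidean projection onto $B(0,\lambda_{t})$, hence nonexpansive and fixing $\n(x_{t})$, gives the pointwise bound $\|\tn(x_{t})-\n(x_{t})\|\le\|\hn(x_{t})-\n(x_{t})\|$, and interpolating against the deterministic bound $3\lambda_{t}/2$ yields the constant $(3/2)^{2-p}\le 3/2$, substantially tighter than $16$. The paper's approach is closer to the original argument of \citet{gorbunov2020stochastic} and keeps the indicator machinery uniform across both bounds; your approach is more elementary, avoids H\"older and Markov entirely, and makes the role of the hypothesis $\|\n(x_{t})\|\le\lambda_{t}/2$ (namely, that $\n(x_{t})$ lies in the projection's fixed-point set) more transparent.
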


\section{Convergence of convex objectives \label{sec:Convex}}

In this section, we establish a high-probability convergence guarantee
for Algorithm \ref{alg:clipped-sgd} for smooth convex objectives
under heavy-tailed noise. Our analysis builds on the work \citet{gorbunov2020stochastic}
for the special case of noise with bounded variance ($p=2$), and
it extends this result to noise with bounded $p$-th moments for any $p\in(1,2]$.

\subsection{Main result}

Theorem \ref{thm:clipped-sgd-convergence-p-convex} provides a high
probability convergence guarantee of clipped-SGD (Algorithm \ref{alg:clipped-sgd})
in the convex setting in along with the choice of parameters.
\begin{thm}
\label{thm:clipped-sgd-convergence-p-convex}Assume $f$ is a convex
and differentiable function which satisfies assumptions (1), (2),
(3), and (4.1). With the choice 
\begin{align*}
\lambda_{t} & =\lambda=\max\left\{ (16T)^{1/p}\sigma;\sqrt{2}LR_{1}\right\} \text{, and }\\
\eta_{t} & =\eta=\frac{R_{1}}{16\lambda\ln\frac{4T}{\delta}}=\frac{R_{1}}{16\ln\frac{4T}{\delta}}\min\left\{ (16T)^{-1/p}\sigma^{-1};(\sqrt{2}LR_{1})^{-1}\right\} ,
\end{align*}
the iterate sequence $(x_{t})_{t\ge1}$ output by Algorithm \ref{alg:clipped-sgd}
satisfies
\begin{align*}
\frac{1}{T}\sum_{t=1}^{T}\left[f(x_{t})-f^{*}\right] & \le32R_{1}\ln\frac{4T}{\delta}\max\left\{ 16^{1/p}T^{\frac{1-p}{p}}\sigma;\sqrt{2}LR_{1}T^{-1}\right\} .
\end{align*}
\end{thm}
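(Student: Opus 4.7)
The plan is to follow the inductive scheme outlined in the paper, modeled on the approach of Gorbunov et al.~but using Lemma~\ref{lem:Gorbunov-F5-p} to handle the bounded $p$-th moment setting. I start from the elementary expansion $R_{t+1}^{2}=\norm{x_{t}-\eta\tn(x_{t})-x^{*}}^{2}$ combined with convexity $\langle\n(x_{t}),x_{t}-x^{*}\rangle\ge f(x_{t})-\mf$, yielding the one-step inequality
\begin{align*}
2\eta(f(x_{t})-\mf)+R_{t+1}^{2}\le R_{t}^{2}+\eta^{2}\norm{\tn(x_{t})}^{2}-2\eta\langle\theta_{t}^{u},x_{t}-x^{*}\rangle-2\eta\langle\theta_{t}^{b},x_{t}-x^{*}\rangle.
\end{align*}
Bounding $\norm{\tn(x_{t})}^{2}\le2\norm{\theta_{t}^{u}}^{2}+2\norm{\E_{t}[\tn(x_{t})]}^{2}$ and summing $t=1,\dots,N$ produces an inequality of the shape $R_{N+1}^{2}+2\eta\sum_{t=1}^{N}(f(x_{t})-\mf)\le R_{1}^{2}+D_{N}+B_{N}+M_{N}+V_{N}$, where $D_{N}$ collects the deterministic (conditional) bias and variance contributions, $B_{N}=-2\eta\sum_{t}\langle\theta_{t}^{b},x_{t}-x^{*}\rangle$ is the bias cross-term, and $M_{N}=-2\eta\sum_{t}\langle\theta_{t}^{u},x_{t}-x^{*}\rangle$ and $V_{N}=2\eta^{2}\sum_{t}(\norm{\theta_{t}^{u}}^{2}-\E_{t}\norm{\theta_{t}^{u}}^{2})$ are martingale sums.

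The core of the proof is an induction on $N$: with probability at least $1-N\delta/T$, I claim $R_{t}^{2}\le2R_{1}^{2}$ for every $t\le N$. Under this hypothesis, $\norm{\n(x_{t})}\le LR_{t}\le\sqrt{2}LR_{1}\le\lambda/2$ (the constants may need a mild adjustment relative to the stated $\sqrt{2}LR_{1}$), so Lemma~\ref{lem:Gorbunov-F5-p} applies and gives $\norm{\theta_{t}^{b}}\le4\sigma^{p}\lambda^{1-p}$, $\E_{t}\norm{\theta_{t}^{u}}^{2}\le16\sigma^{p}\lambda^{2-p}$, and $\norm{\theta_{t}^{u}}\le2\lambda$. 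Using these together with Cauchy--Schwarz and $R_{t}\le\sqrt{2}R_{1}$, the deterministic contributions $D_{N}$ and $B_{N}$ are controlled at the scale $O(\eta T\sigma^{p}\lambda^{1-p}R_{1}+\eta^{2}T\sigma^{p}\lambda^{2-p})$, which with the prescribed $\eta,\lambda$ is a small fraction of $R_{1}^{2}$.

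The main obstacle is applying Freedman's inequality (Lemma~\ref{thm:freedman}) to $M_{N}$ and $V_{N}$: the a.s.\ bounds $|\langle\theta_{t}^{u},x_{t}-x^{*}\rangle|\le2\lambda R_{t}$ and $|\norm{\theta_{t}^{u}}^{2}-\E_{t}\norm{\theta_{t}^{u}}^{2}|\le4\lambda^{2}$ depend on the inductive hypothesis itself, creating a circular dependence. To break this, I apply Freedman's not to the original increments but to the surrogate martingale differences
\begin{align*}
\widetilde{\xi}_{t}:=\langle\theta_{t}^{u},x_{t}-x^{*}\rangle\indicator\{R_{t}^{2}\le2R_{1}^{2}\},\qquad\widetilde{\zeta}_{t}:=\bigl(\norm{\theta_{t}^{u}}^{2}-\E_{t}\norm{\theta_{t}^{u}}^{2}\bigr)\indicator\{\norm{\n(x_{t})}\le\lambda/2\},
\end{align*}
whose magnitudes and conditional second moments are deterministically bounded by Lemma~\ref{lem:Gorbunov-F5-p} evaluated on the indicator event. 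Freedman with failure probability $\delta/(2T)$ each then gives deviation bounds of order $\lambda R_{1}\sqrt{T\sigma^{p}\lambda^{-p}}\log(T/\delta)$ and the analogous bound for $V_{N}$. Under the inductive hypothesis up to step $t$, the indicators equal $1$, so $\widetilde{\xi}_{t}$ and $\widetilde{\zeta}_{t}$ coincide with the original increments and the bounds transfer to $M_{N}$ and $V_{N}$. Substituting $\eta=R_{1}/(16\lambda\ln(4T/\delta))$ and $\lambda=\max\{(16T)^{1/p}\sigma,\sqrt{2}LR_{1}\}$ makes the right-hand side at most $2R_{1}^{2}$, closing the induction $R_{N+1}^{2}\le2R_{1}^{2}$. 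Finally, discarding $R_{N+1}^{2}\ge0$, dividing by $2\eta T$, and plugging in $\lambda$ yields the advertised $O(R_{1}\lambda\log(T/\delta)/T)$ rate, and a union bound over $t=1,\dots,T$ converts the per-step failure $\delta/T$ into the overall failure probability $\delta$.
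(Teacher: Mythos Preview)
Your inductive scheme, the surrogate martingale trick for applying Freedman's inequality, and the overall structure are exactly what the paper does. However, there is a genuine gap in how you handle the term $\eta^{2}\norm{\tn(x_{t})}^{2}$.

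You write $\norm{\tn(x_{t})}^{2}\le 2\norm{\theta_{t}^{u}}^{2}+2\norm{\E_{t}[\tn(x_{t})]}^{2}$ and fold $2\eta^{2}\sum_{t}\norm{\E_{t}[\tn(x_{t})]}^{2}$ into $D_{N}$, asserting it is $O(\eta T\sigma^{p}\lambda^{1-p}R_{1}+\eta^{2}T\sigma^{p}\lambda^{2-p})$. But $\E_{t}[\tn(x_{t})]=\nabla f(x_{t})+\theta_{t}^{b}$, so this quantity contains $\norm{\nabla f(x_{t})}^{2}$, which under your induction is only bounded by $2L^{2}R_{1}^{2}$. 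Summed over $T$ steps you get a contribution of order $\eta^{2}TL^{2}R_{1}^{2}$; with $\eta\lambda=R_{1}/(16\ln(4T/\delta))$ and $\lambda\ge\sqrt{2}LR_{1}$ this is $\Theta\bigl(TR_{1}^{2}/\ln^{2}(4T/\delta)\bigr)$, which is \emph{not} a small fraction of $R_{1}^{2}$ and blows up with $T$. Your induction therefore does not close as stated.

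The paper's remedy is precisely Lemma~\ref{lem:convex-basic-analysis}: instead of bounding $\norm{\nabla f(x_{t})}^{2}$ via the distance hypothesis, one uses smoothness (Lemma~\ref{lem:smooth-prop}) to write $\norm{\nabla f(x_{t})}^{2}\le 2L\Delta_{t}$ and then \emph{absorbs} the resulting $2L\eta\,\Delta_{t}$ into the left-hand side, which is legitimate because $\eta\le 1/(4L)$ under the stated parameter choice. This converts the one-step inequality into $\eta\Delta_{t}\le R_{t}^{2}-R_{t+1}^{2}+2\eta^{2}\norm{\theta_{t}}^{2}-2\eta\langle\theta_{t},x_{t}-x^{*}\rangle$, where the right-hand side now contains only the genuine noise terms $\theta_{t}=\theta_{t}^{u}+\theta_{t}^{b}$ and no residual $\norm{\nabla f(x_{t})}^{2}$. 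After this correction, the rest of your argument (the five-way decomposition, the surrogate $Z_{t}=(x^{*}-x_{t})\indicator\{R_{t}^{2}\le 2R_{1}^{2}\}$, Freedman on the two martingale pieces, and the union bound) goes through essentially as in the paper.
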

Note that, in the deterministic setting with $\sigma=0$, the step
size is $O\left(1/L\right)$ and the convergence rate is $O\left(1/T\right)$,
analogously to the step size and convergence rate of (deterministic)
gradient descent for smooth convex functions. Thus the above convergence
rate is adaptive to noise. In the stochastic setting with $\sigma>0$,
the convergence rate is $O\left(T^{\frac{1-p}{p}}\right)$. This is
the optimal bound for gradient methods with heavy tailed noise, as
proved in \citet{raginsky2009information,vural2022mirror}.

Similarly to \citet{gorbunov2020stochastic}, the theoretical choice
for the clipping parameter requires knowledge of a suitable upper
bound on problem parameters such as the initial distance $R_{1}=\left\Vert x_{1}-x^{*}\right\Vert $.
In contrast, the clipping strategy for non-convex objectives that
we study in the next section is very different, and it requires much
weaker information. We discuss these differences in more detail in
the following section.

\subsection{Proof overview}

Our work generalizes the techniques from \citet{gorbunov2020stochastic}
under the bounded variance assumption to the bounded $p$-moment noise
assumption for any $p\in(1,2]$. The analysis starts with the standard
function value analysis for convex smooth functions. We obtain the
following upper bound on the function value gaps, $\Delta_{t}:=f(x_{t})-f^{*}$.
We defer the proof to Section \ref{sec:appendix-convex-pfs} of the
Appendix.
\begin{lem}
\label{lem:convex-basic-analysis}Assuming that $\eta_{t}\le\frac{1}{4L}$
then for all $t\ge1$,
\begin{align}
\eta_{t}\Delta_{t} & \le\left\Vert x_{t}-x^{*}\right\Vert ^{2}-\left\Vert x_{t+1}-x^{*}\right\Vert ^{2}+2\eta_{t}^{2}\left\Vert \theta_{t}\right\Vert ^{2}-2\eta_{t}\left\langle \theta_{t},x_{t}-x^{*}\right\rangle .\label{eq:clipped-sgd-basic-inequality-convex}
\end{align}
Thus, by summing up over $t$, for every $k\ge1$
\begin{align}
\sum_{t=1}^{k}\eta_{t}\Delta_{t} & \le\left\Vert x_{1}-x^{*}\right\Vert ^{2}-\left\Vert x_{k+1}-x^{*}\right\Vert ^{2}+2\sum_{t=1}^{k}\eta_{t}^{2}\left\Vert \theta_{t}\right\Vert ^{2}-2\sum_{t=1}^{k}\eta_{t}\left\langle \theta_{t},x_{t}-x^{*}\right\rangle .\label{eq:clipped-sgd-convex-sum}
\end{align}
\end{lem}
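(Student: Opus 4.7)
The plan is a direct one-step analysis of the clipped-SGD iterate in the distance-to-optimum, combining convexity of $f$ with the standard smooth-function consequence $\|\nabla f(x)\|^2 \le 2L(f(x)-f^*)$. Writing $\tn(x_t) = \nabla f(x_t) + \theta_t$ and using the update $x_{t+1} = x_t - \eta_t \tn(x_t)$, I would expand
\[
\|x_{t+1}-x^*\|^2 = \|x_t-x^*\|^2 - 2\eta_t\langle \nabla f(x_t), x_t-x^*\rangle - 2\eta_t\langle \theta_t, x_t-x^*\rangle + \eta_t^2\|\nabla f(x_t)+\theta_t\|^2,
\]
and then split the last term via $\|a+b\|^2\le 2\|a\|^2+2\|b\|^2$ to produce $2\eta_t^2\|\nabla f(x_t)\|^2 + 2\eta_t^2\|\theta_t\|^2$. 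The $2\eta_t^2\|\theta_t\|^2$ piece already matches the target inequality, and the $-2\eta_t\langle\theta_t,x_t-x^*\rangle$ piece is carried through unchanged.

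Next I would apply the two convex/smooth facts. Convexity gives $\langle\nabla f(x_t),x_t-x^*\rangle\ge f(x_t)-f^* = \Delta_t$, which turns the first inner product into $-2\eta_t\Delta_t$. The smoothness inequality $\|\nabla f(x_t)\|^2 \le 2L\Delta_t$ (which follows by applying the quadratic upper bound at $x^*$ together with $f(x^*)\le f(x_t - \frac{1}{L}\nabla f(x_t))$) converts $2\eta_t^2\|\nabla f(x_t)\|^2$ into at most $4L\eta_t^2\Delta_t$. Combining,
\[
\|x_{t+1}-x^*\|^2 \le \|x_t-x^*\|^2 - 2\eta_t\Delta_t + 4L\eta_t^2\Delta_t - 2\eta_t\langle\theta_t,x_t-x^*\rangle + 2\eta_t^2\|\theta_t\|^2.
\]

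The final step is to invoke $\eta_t \le \frac{1}{4L}$, so $4L\eta_t^2\Delta_t \le \eta_t\Delta_t$, which leaves a net coefficient of $-\eta_t$ on $\Delta_t$. Rearranging gives exactly (\ref{eq:clipped-sgd-basic-inequality-convex}). Summing this inequality from $t=1$ to $k$ telescopes the $\|x_t-x^*\|^2-\|x_{t+1}-x^*\|^2$ pieces and yields (\ref{eq:clipped-sgd-convex-sum}).

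I do not anticipate a genuine obstacle here: the clipping never enters the argument since $\theta_t = \tn(x_t)-\nabla f(x_t)$ is treated as an arbitrary additive perturbation of the gradient, and the bias/variance control of Lemma \ref{lem:Gorbunov-F5-p} is only needed in later steps that bound $\|\theta_t\|^2$ and $\langle\theta_t,x_t-x^*\rangle$ with high probability. The only small care is the use of $\|\nabla f(x_t)\|^2\le 2L\Delta_t$, which requires that $f^*$ is a \emph{global} minimum value attained on $\R^d$, as guaranteed by assumption (4.1).
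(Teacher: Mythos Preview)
Your proposal is correct and follows essentially the same route as the paper: both arguments expand the update, split $\|\tn(x_t)\|^2=\|\nabla f(x_t)+\theta_t\|^2$ via $\|a+b\|^2\le 2\|a\|^2+2\|b\|^2$, apply convexity to get $\langle\nabla f(x_t),x_t-x^*\rangle\ge\Delta_t$, use the smoothness consequence $\|\nabla f(x_t)\|^2\le 2L\Delta_t$, and finish with $\eta_t\le\frac{1}{4L}$. The only cosmetic difference is that the paper starts from the convexity inequality and introduces the distance telescope through the three-point identity, whereas you start from the distance expansion $\|x_{t+1}-x^*\|^2$ and then invoke convexity; the algebra is identical.
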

We highlight the appearance of the term $\left\langle \theta_{t},x_{t}-x^{*}\right\rangle =\left\langle \tn(x_{t})-\nabla f(x_{t}),x_{t}-x^{*}\right\rangle $:
it is the primary reason for the slower rate achieved in the convex
setting compared to the non-convex setting. In the next section, we
will return to this point as we outline our analysis for the non-convex
case. Starting from equation (\ref{eq:clipped-sgd-convex-sum}), it
suffices to bound the sum $2\sum_{t=1}^{k}\eta_{t}^{2}\left\Vert \theta_{t}\right\Vert ^{2}-2\sum_{t=1}^{k}\eta_{t}\left\langle \theta_{t},x_{t}-x^{*}\right\rangle $
on the RHS with high probability. This is where we look to employ
the tools from Lemma \ref{lem:Gorbunov-F5-p} given by our clipped
gradient estimates, as well as extract appropriate martingale difference
sequences and control them using Freedman's inequality. To accomplish
this, we start with the following decomposition: for $N\ge1$, we
have
\begin{align*}
  2\eta^{2}\sum_{t=1}^{N}\left\Vert \theta_{t}\right\Vert ^{2}-2\eta\sum_{t=1}^{N}\left\langle x_{t}-x^{*},\theta_{t}\right\rangle 
& \leq  4\eta^{2}\sum_{t=1}^{N}\E_{t}\left[\left\Vert \theta_{t}^{u}\right\Vert ^{2}\right]+4\eta^{2}\sum_{t=1}^{N}\left\Vert \theta_{t}^{b}\right\Vert ^{2}+2\eta\sum_{t=1}^{N}\left\langle x_{t}-x^{*},\theta_{t}^{b}\right\rangle \\
 & +2\eta\sum_{t=1}^{N}\left\langle x_{t}-x^{*},\theta_{t}^{u}\right\rangle +4\eta^{2}\sum_{t=1}^{N}\left(\left\Vert \theta_{t}^{u}\right\Vert ^{2}-\E_{t}\left[\left\Vert \theta_{t}^{u}\right\Vert ^{2}\right]\right).
\end{align*}
The main argument in this proof is an induction on $N$ that, with
high probability, the terms above combined are bounded by $R_{1}^{2}$
for all $t\le N$. The induction gives a bound on $\Delta_{t}$ which
in turn is an upper bound for the length $\left\Vert \nabla f(x_{t})\right\Vert $
of the true gradient. Conditioning on $\left\Vert \n(x_{t})\right\Vert \leq\lambda/2$
(which can be achieved using appropriate parameter choice), the first
three terms in the RHS can be bounded by Lemma \ref{lem:Gorbunov-F5-p}.
For the last two terms, since each term forms a martingale difference
sequence, we use Freedman's inequality. By selecting suitable parameters,
we can obtain the desired bound.

\section{Convergence of non-convex objectives \label{sec:Non-convex}}

In this section, we present the main convergence guarantee to a stationary
point for clipped-SGD when the objective is non-convex. We also highlight
key differences between the convex and the non-convex settings to
give some intuition behind why a better convergence rate can be achieved
in the non-convex setting.

\subsection{Main result}

The choice of parameters and convergence guarantee of Algorithm \ref{alg:clipped-sgd}
in the non-convex setting is given in the following theorem. 
\begin{thm}
\label{thm:clipped-sgd-convergence-p-nonconvex}Assume $f$ satisfies
Assumptions (1), (2), (3) and (4.2), with the choice 
\begin{align*}
\lambda_{t} & =\lambda=\max\left\{ \left(\frac{8\ln\frac{4T}{\delta}}{\sqrt{L\Delta_{1}}}\right)^{\frac{1}{p-1}}T^{\frac{1}{3p-2}}\sigma^{\frac{p}{p-1}};4\sqrt{L\Delta_{1}};32^{1/p}\sigma T^{\frac{1}{3p-2}}\right\} \\
\eta_{t} & =\eta=\frac{\sqrt{\Delta_{1}}T^{\frac{1-p}{3p-2}}}{8\lambda\sqrt{L}\ln\frac{4T}{\delta}}=\frac{\sqrt{\Delta_{1}}}{8\sqrt{L}\ln\frac{4T}{\delta}}\min\left\{ \left(\frac{8\ln\frac{4T}{\delta}}{\sqrt{L\Delta_{1}}}\right)^{\frac{-1}{p-1}}T^{\frac{-p}{3p-2}}\sigma^{\frac{-p}{p-1}};\frac{T^{\frac{1-p}{3p-2}}}{4\sqrt{L\Delta_{1}}};\frac{T^{\frac{-p}{3p-2}}}{32^{1/p}\sigma}\right\} 
\end{align*}
the iterate sequence $(x_{t})_{t\ge1}$ output by Algorithm \ref{alg:clipped-sgd}
that satisfies
\begin{align*}
\frac{1}{T}\sum_{t=1}^{T}\left\Vert \nabla f(x_{t})\right\Vert ^{2} & \le32\sqrt{\Delta_{1}L}\ln\frac{4T}{\delta}\max\left\{ \left(\frac{8\ln\frac{4T}{\delta}}{\sqrt{L\Delta_{1}}}\right)^{\frac{1}{p-1}}T^{\frac{2-2p}{3p-2}}\sigma^{\frac{p}{p-1}};4\sqrt{L\Delta_{1}}T^{\frac{1-2p}{3p-2}};32^{1/p}\sigma T^{\frac{2-2p}{3p-2}}\right\} .
\end{align*}
\end{thm}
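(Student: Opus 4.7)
The plan is to mirror the four-term decomposition strategy of Section~\ref{sec:Convex} but driven by the $L$-smoothness descent lemma rather than distance contraction, and to exploit the ``absorb into $\norm{\n(x_t)}^2$'' trick highlighted in Section~1.3. First I apply smoothness to obtain, for $\eta \le 1/(4L)$,
\[
f(x_{t+1}) \le f(x_t) - \frac{\eta}{2}\norm{\n(x_t)}^2 - \eta\langle \n(x_t), \theta_t\rangle + L\eta^2 \norm{\theta_t}^2,
\]
telescope over $t=1,\dots,N$ using $f(x_{N+1}) \ge f^*$, decompose $\theta_t = \theta_t^u + \theta_t^b$ via $\norm{\theta_t}^2 \le 2\norm{\theta_t^u}^2 + 2\norm{\theta_t^b}^2$, and apply Young's inequality $-\langle \n(x_t),\theta_t^b\rangle \le \tfrac{1}{4}\norm{\n(x_t)}^2 + \norm{\theta_t^b}^2$ so that half of the gradient-norm-squared on the right is sacrificed to absorb the bias cross-term. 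This reduces matters to controlling four quantities on the right-hand side: (i) the deterministic bias $\sum_t \norm{\theta_t^b}^2$, (ii) the conditional variance $\sum_t \E_t[\norm{\theta_t^u}^2]$, (iii) the gradient--noise martingale $\sum_t \langle \n(x_t), \theta_t^u\rangle$, and (iv) the variance--deviation martingale $\sum_t (\norm{\theta_t^u}^2 - \E_t[\norm{\theta_t^u}^2])$.

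The core argument is an induction on $N$ showing that $\Delta_t \le 2\Delta_1$ holds for all $t \le N$ with high probability. By $L$-smoothness and the global lower bound $f \ge f^*$, this gives $\norm{\n(x_t)} \le \sqrt{2L\Delta_t} \le 2\sqrt{L\Delta_1} \le \lambda/2$ under the choice $\lambda \ge 4\sqrt{L\Delta_1}$, at which point Lemma~\ref{lem:Gorbunov-F5-p} supplies $\norm{\theta_t^b} \le 4\sigma^p\lambda^{1-p}$ and $\E_t[\norm{\theta_t^u}^2] \le 16\sigma^p\lambda^{2-p}$, immediately bounding (i) and (ii). For (iv), the increments are $\le 4\lambda^2$ unconditionally by~(\ref{eq:1-p}), so Freedman's inequality applies directly. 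The main obstacle is (iii): $|\langle \n(x_t),\theta_t^u\rangle|$ is bounded only under the inductive hypothesis, not a priori, so Freedman's hypotheses are not trivially verified. I resolve this via the surrogate-sequence construction outlined in Section~1.3: define
\[
X_t := -\eta\,\langle \n(x_t),\theta_t^u\rangle \cdot \indicator\{\Delta_s \le 2\Delta_1 \text{ for all } s < t\},
\]
which is a martingale difference, almost surely bounded by $O(\eta\lambda\sqrt{L\Delta_1})$, with conditional variance $O(\eta^2 L\Delta_1 \sigma^p\lambda^{2-p})$ again by Lemma~\ref{lem:Gorbunov-F5-p}. Applying Freedman's to $\{X_t\}$, union-bounding with (iv), and observing that $\sum_t X_t$ agrees with the true stochastic increment on the inductive event, yields a bound with the standard $\log(4T/\delta)$ factor sufficient to close the induction and give $\Delta_{N+1} \le 2\Delta_1$.

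The three-case maximum for $\lambda$ in the theorem arises from balancing the resulting contributions: the deterministic rate $\Delta_1/(\eta T) \sim \sqrt{L\Delta_1}\,\lambda\log(4T/\delta)/T$, the martingale-plus-variance contribution $L\eta\sigma^p\lambda^{2-p}\log(4T/\delta) + \eta\lambda\sqrt{L\Delta_1}\log(4T/\delta)$, and the squared-bias contribution $\sigma^{2p}\lambda^{2-2p}$, all subject to the floor $\lambda \ge 4\sqrt{L\Delta_1}$ required to activate Lemma~\ref{lem:Gorbunov-F5-p}. Setting $\eta \sim \sqrt{\Delta_1}/(\lambda\sqrt{L}\log(4T/\delta))$ and equating terms produces the $\sigma^{p/(p-1)}T^{1/(3p-2)}$ scaling of $\lambda$. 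This differs qualitatively from the convex $\sigma T^{1/p}$ scaling precisely because the bias enters here \emph{quadratically} via Young's absorption rather than linearly via a Cauchy--Schwarz against $x_t - x^*$; this is the technical manifestation of the convex/non-convex separation discussed in Section~1.3. The hard part of the proof is not the calculation but the surrogate construction, since without it Freedman's inequality cannot be invoked for (iii) in the absence of a bounded-gradient assumption such as the one used by~\citet{cutkosky2021high}.
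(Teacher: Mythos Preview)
Your proposal is correct and follows essentially the same route as the paper: the smoothness descent lemma with the Young/Cauchy--Schwarz absorption of $\langle\nabla f(x_t),\theta_t^b\rangle$ into $\tfrac{1}{4}\|\nabla f(x_t)\|^2$, the four-term decomposition into bias, conditional variance, gradient--noise martingale, and variance--deviation martingale, the induction on $\Delta_t\le 2\Delta_1$ to activate Lemma~\ref{lem:Gorbunov-F5-p}, and the surrogate sequence for the gradient--noise term so that Freedman applies --- this is exactly the structure of Lemmas~\ref{lem:nonconvex-basic-analysis} and~\ref{lem:nonconvex-induction}. One small fix: in your surrogate $X_t$ the indicator should read $\indicator\{\Delta_s\le 2\Delta_1\text{ for all }s\le t\}$ (or equivalently just $\indicator\{\Delta_t\le 2\Delta_1\}$ as the paper does), not $s<t$; otherwise the indicator being $1$ does not bound $\|\nabla f(x_t)\|$ and your claimed a.s.\ bound $|X_t|=O(\eta\lambda\sqrt{L\Delta_1})$ fails --- this is harmless since $\Delta_t$ is $\mathcal F_{t-1}$-measurable, so the corrected indicator keeps $X_t$ a martingale difference.
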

Recall from the previous section that, in the convex setting, the
parameters $\lambda$ and $\eta$ are set based on the initial distance
$R_{1}=\left\Vert x_{1}-x^{*}\right\Vert $. In contrast, for the non-convex
setting, we set the parameters based on the initial function value
gap $\Delta_{1}=f(x_{1})-f^{*}$. Recall that $f^{*}$ only needs
to be a lower bound for the function $f$, which is
readily computable  in many cases. For example, for over-parametrized models, the
loss is zero at the optimum, and we can set $f^{*}=0$. This means
the knowledge required to set the parameters is strictly less than in the convex setting. 

Note that the convergence guarantee
in Theorem \ref{thm:clipped-sgd-convergence-p-nonconvex} is ``almost''
adaptive to noise. When $\sigma=0$, it is $O\left(T^{\frac{1-2p}{3p-2}}\right)$
which is not the usual $O\left(T^{-1}\right)$ rate. However, we know
that if $\sigma=0$, any $p\in(1,2]$ will satisfy the noise
condition. By setting $p\to1$, we obtain the rate $O\left(T^{-1}\right)$. Finally, note that the convergence rate $O\left(T^{\frac{2-2p}{3p-2}}\right)$
is stronger than the optimal rate of $O\left(T^{\frac{1-p}{p}}\right)$
for the convex setting. This rate is also the optimal convergence
rate for non-convex functions under noises with bounded $p$-th moment
as proven in \citep{zhang2020adaptive}.

\subsection{Analysis of clipped-SGD in the non-convex setting }

The analysis starts with the following bound for the gradient $\left\Vert \nabla f(x_{t})\right\Vert ^{2}$.
\begin{lem}
\label{lem:nonconvex-basic-analysis}Assuming that $\eta_{t}\le\frac{1}{L}$
then for all $t\ge1$,
\begin{align}
\frac{\eta_{t}}{2}\left\Vert \nabla f(x_{t})\right\Vert ^{2} & \le\Delta_{t}-\Delta_{t+1}+L\eta_{t}^{2}\left\Vert \theta_{t}^{u}\right\Vert ^{2}+\left(L\eta_{t}^{2}-\eta_{t}\right)\left\langle \nabla f(x_{t}),\theta_{t}^{u}\right\rangle +\frac{3\eta_{t}}{2}\left\Vert \theta_{t}^{b}\right\Vert ^{2}\label{eq:clipped-sgd-basic-inequality-nonconvex}
\end{align}
Thus, by summing up over $t$, for every $k\ge1$
\begin{equation}
\sum_{t=1}^{k}\frac{\eta_{t}}{2}\left\Vert \nabla f(x_{t})\right\Vert ^{2}\le\Delta_{1}-\Delta_{T}+\sum_{t=1}^{k}L\eta_{t}^{2}\left\Vert \theta_{t}^{u}\right\Vert ^{2}+\sum_{t=1}^{k}\left(L\eta_{t}^{2}-\eta_{t}\right)\left\langle \nabla f(x_{t}),\theta_{t}^{u}\right\rangle +\sum_{t=1}^{k}\frac{3\eta_{t}}{2}\left\Vert \theta_{t}^{b}\right\Vert ^{2}.\label{eq:sum-nonconvex-basic}
\end{equation}
\end{lem}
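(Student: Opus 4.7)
The plan is to derive the single-step inequality from the standard $L$-smoothness quadratic upper bound applied to $f(x_{t+1})$, substitute the clipped update $x_{t+1}-x_t=-\eta_t\tn(x_t)$, and then carry out the bias/unbiased decomposition $\tn(x_t)=\nabla f(x_t)+\theta_t^u+\theta_t^b$. The telescoped inequality in the lemma then follows by summing from $t=1$ to $k$ and noting that the $\Delta_t-\Delta_{t+1}$ sum telescopes to $\Delta_1-\Delta_{k+1}$, with the $\Delta_{k+1}$ term rewritten (or upper bounded using $\Delta_{k+1}\ge 0$) consistently with the statement.

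For the per-step bound, I would first write
\[
\Delta_{t+1}-\Delta_t\le -\eta_t\langle\nabla f(x_t),\tn(x_t)\rangle+\tfrac{L\eta_t^2}{2}\|\tn(x_t)\|^2.
\]
Expanding the inner product using $\tn(x_t)=\nabla f(x_t)+\theta_t^u+\theta_t^b$ gives three terms, and expanding the squared norm via $\|a+b+c\|^2\le \|a\|^2+2\langle a,b+c\rangle+2\|b\|^2+2\|c\|^2$ (applied with $a=\nabla f(x_t)$, $b=\theta_t^u$, $c=\theta_t^b$) produces the same inner products together with $2\|\theta_t^u\|^2$ and $2\|\theta_t^b\|^2$. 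Collecting coefficients of $\|\nabla f(x_t)\|^2$ yields $\frac{L\eta_t^2}{2}-\eta_t$, which is $\le -\eta_t/2$ exactly because of the assumption $\eta_t\le 1/L$. The coefficient of $\langle\nabla f(x_t),\theta_t^u\rangle$ is $L\eta_t^2-\eta_t$, which I deliberately leave as an inner product: this is the martingale-difference term that will later be fed to Freedman's inequality in the main convergence proof, so keeping it in this form is important and matches the statement of the lemma.

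The key manipulation is on the bias cross term $(L\eta_t^2-\eta_t)\langle\nabla f(x_t),\theta_t^b\rangle$, which I would dispatch via Young's inequality $|\langle a,b\rangle|\le \tfrac12\|a\|^2+\tfrac12\|b\|^2$. Since $\eta_t-L\eta_t^2\ge 0$, this contributes at most $\tfrac{\eta_t-L\eta_t^2}{2}\|\nabla f(x_t)\|^2+\tfrac{\eta_t-L\eta_t^2}{2}\|\theta_t^b\|^2$. Adding this to the already accumulated $-\eta_t/2$ in front of $\|\nabla f(x_t)\|^2$ gives a residual coefficient of $-L\eta_t^2/2\le 0$, which is at worst $-\eta_t/2$ so we can retain the clean $-\eta_t/2\|\nabla f(x_t)\|^2$ on the left-hand side. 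The new coefficient of $\|\theta_t^b\|^2$ becomes $L\eta_t^2+\tfrac{\eta_t-L\eta_t^2}{2}=\tfrac{\eta_t+L\eta_t^2}{2}\le \eta_t\le \tfrac{3\eta_t}{2}$ (again using $L\eta_t\le 1$), matching the stated bound. Rearranging and summing over $t$ yields~(\ref{eq:sum-nonconvex-basic}).

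I do not anticipate a deep obstacle here, since the derivation is a mechanical expansion; the only place requiring care is choosing \emph{not} to apply Young's inequality to the $\langle\nabla f(x_t),\theta_t^u\rangle$ term and \emph{to} apply it to $\langle\nabla f(x_t),\theta_t^b\rangle$. This asymmetric treatment reflects the analytical strategy of the paper: the unbiased part $\theta_t^u$ will be controlled later by Freedman's inequality (requiring it to remain in linear form), whereas the bias part $\theta_t^b$ has a deterministic norm bound from Lemma~\ref{lem:Gorbunov-F5-p} and should be squared right away. Getting this split right is precisely what allows the subsequent analysis in Section~\ref{sec:Non-convex} to yield the optimal $O(T^{(2-2p)/(3p-2)})$ rate rather than something weaker.
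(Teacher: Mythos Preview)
Your plan matches the paper's proof step for step: start from the smoothness upper bound, substitute the update, decompose $\tn(x_t)=\nabla f(x_t)+\theta_t^u+\theta_t^b$, leave $\langle\nabla f(x_t),\theta_t^u\rangle$ as a signed inner product, and apply Young's inequality only to the bias cross term. The paper does the squared-norm step slightly differently (it first keeps $\|\theta_t\|^2$ exactly and later uses $\|\theta_t\|^2\le 2\|\theta_t^u\|^2+2\|\theta_t^b\|^2$), but this is equivalent to your three-term expansion.

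There is, however, one arithmetic slip that breaks the chain as written. You first relax the coefficient $\tfrac{L\eta_t^2}{2}-\eta_t$ to $-\tfrac{\eta_t}{2}$, and only afterwards add the Young contribution $\tfrac{\eta_t-L\eta_t^2}{2}$, obtaining $-\tfrac{L\eta_t^2}{2}$. You then claim this ``is at worst $-\eta_t/2$,'' but since $L\eta_t\le 1$ we have $-\tfrac{L\eta_t^2}{2}\ge -\tfrac{\eta_t}{2}$, so the inequality points the wrong way and you cannot recover $\tfrac{\eta_t}{2}\|\nabla f(x_t)\|^2$ on the left. The fix is trivial and is exactly what the paper does: keep the exact coefficient $\tfrac{L\eta_t^2}{2}-\eta_t$ until after Young's inequality, so that
\[
\Bigl(\tfrac{L\eta_t^2}{2}-\eta_t\Bigr)+\tfrac{\eta_t-L\eta_t^2}{2}=-\tfrac{\eta_t}{2}
\]
on the nose. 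With that correction your derivation goes through and reproduces the paper's bound.
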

This lemma will help us gain an insight into why there is a separation
between the convex and non-convex settings. The last two terms on
the RHS of (\ref{eq:clipped-sgd-basic-inequality-nonconvex}) come
from the following 
\begin{align*}
\left\langle \nabla f(x_{t}),\theta_{t}\right\rangle  & =\left\langle \nabla f(x_{t}),\theta_{t}^{u}\right\rangle +\left\langle \nabla f(x_{t}),\theta_{t}^{b}\right\rangle 
\end{align*}
This inner product contains the gradient $\nabla f(x_{t})$ which
also appears in the LHS of (\ref{eq:clipped-sgd-basic-inequality-nonconvex})
as $\left\Vert \nabla f(x_{t})\right\Vert ^{2}$. It is precisely
this matching that allows us to use $\left\Vert \nabla f(x_{t})\right\Vert ^{2}$
to partially absorb the RHS terms. Here, we use Cauchy-Schwarz to
get $\left\langle \nabla f(x_{t}),\theta_{t}^{b}\right\rangle \le\frac{1}{2}\left\Vert \nabla f(x_{t})\right\Vert ^{2}+\frac{1}{2}\left\Vert \theta_{t}^{b}\right\Vert ^{2}$.
With the remaining terms in the RHS of (\ref{eq:clipped-sgd-basic-inequality-nonconvex}),
we can choose the parameters to balance the bound, which turns out
to offer better convergence guarantee. Now, let us return to the convex
case. In the RHS of (\ref{eq:clipped-sgd-basic-inequality-convex}),
we have $\left\langle \theta_{t},x_{t}-x^{*}\right\rangle $, while
the LHS contains the function value gap $f(x_{t})-f^{*}$. Unfortunately,
for general convex functions, we cannot relate the distance $\left\Vert x_{t}-x^{*}\right\Vert $
and the function value gap $f(x_{t})-f^{*}$. As a result, the term
$\left\langle \theta_{t},x_{t}-x^{*}\right\rangle $ prevents us from
achieving a comparable convergence rate as in the non-convex case. 

Next, we show some simple properties of the parameter choice in Lemma
\ref{lem:parameter-property-nonconvex}, where we defer the derivations
to Section \ref{sec:Proof-for-non-convex} of the Appendix.
\begin{lem}
\label{lem:parameter-property-nonconvex}With the choice of $\eta$
and $\lambda$ in Theorem \ref{thm:clipped-sgd-convergence-p-nonconvex},
we have 
\begin{align*}
\frac{1}{L}\left(\frac{\sigma}{\lambda}\right)^{p} & \le\eta\\
\eta & \le\frac{1}{L}\\
\left(\frac{\sigma}{\lambda}\right)^{p}T^{\frac{p}{3p-2}} & \le\frac{1}{32}\\
TL\left(\frac{\sigma}{\lambda}\right)^{p}\lambda^{2}\eta^{2} & \leq\frac{\Delta_{1}}{2048}.
\end{align*}
\end{lem}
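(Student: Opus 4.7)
The plan is to verify each of the four inequalities by matching it against exactly one (or, for the last one, a combination) of the three quantities in the maximum defining $\lambda$. Since $\lambda$ is a maximum of three terms, for each inequality I will use whichever term dominates the corresponding bound, and the other two terms will only strengthen it.

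For inequality (1), I would plug the definition of $\eta$ into the desired bound $\frac{1}{L}(\sigma/\lambda)^{p}\le\eta$ and rearrange to the equivalent condition
\[
\lambda^{p-1}\ge\frac{8\sigma^{p}\ln(4T/\delta)}{\sqrt{L\Delta_{1}}}\,T^{(p-1)/(3p-2)}.
\]
Raising to the power $1/(p-1)$, this is precisely the first term inside the max defining $\lambda$, so (1) follows. For (2), plugging in $\eta$ reduces $\eta\le1/L$ to $\lambda\ge\sqrt{L\Delta_{1}}\,T^{(1-p)/(3p-2)}/(8\ln(4T/\delta))$. Since $p>1$ gives $T^{(1-p)/(3p-2)}\le 1$ and $8\ln(4T/\delta)\ge 8\ln 4>1$, this is implied by the second term $\lambda\ge 4\sqrt{L\Delta_{1}}$ in the maximum. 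For (3), the third term $\lambda\ge 32^{1/p}\sigma T^{1/(3p-2)}$ gives, after raising to the $p$-th power, $\lambda^{p}\ge 32\sigma^{p}T^{p/(3p-2)}$, which is exactly $(\sigma/\lambda)^{p}T^{p/(3p-2)}\le 1/32$.

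Finally, for (4) I would substitute $\eta^{2}=\Delta_{1}T^{2(1-p)/(3p-2)}/\bigl(64\lambda^{2}L(\ln(4T/\delta))^{2}\bigr)$ into the LHS. The factors of $\lambda^{2}$ and $L$ cancel, and the exponents of $T$ combine as $1+2(1-p)/(3p-2)=p/(3p-2)$, so
\[
TL\left(\tfrac{\sigma}{\lambda}\right)^{p}\lambda^{2}\eta^{2}=\frac{\Delta_{1}}{64(\ln(4T/\delta))^{2}}\left(\frac{\sigma}{\lambda}\right)^{p}T^{p/(3p-2)}.
\]
Applying (3) and $(\ln(4T/\delta))^{2}\ge 1$ yields the desired bound $\Delta_{1}/2048$.

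There is no conceptual obstacle here; the entire argument is elementary algebra once the three terms in the max are matched to the right inequality. The only place where care is needed is the exponent arithmetic in (4), namely verifying $1+2(1-p)/(3p-2)=p/(3p-2)$, and checking in (2) that the combination $T^{(1-p)/(3p-2)}/(8\ln(4T/\delta))$ stays below the constant $4$ so that the second term of the max suffices.
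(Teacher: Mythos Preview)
Your proposal is correct and follows essentially the same approach as the paper's own proof: each inequality is established by invoking the corresponding term of the maximum defining $\lambda$, and the fourth is obtained by substituting $\eta$, collapsing the exponents via $1+\frac{2(1-p)}{3p-2}=\frac{p}{3p-2}$, and then applying the third inequality together with $\ln(4T/\delta)\ge 1$.
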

We proceed by obtaining a high-probability bound for the RHS of (\ref{eq:sum-nonconvex-basic})
which will help us achieve a high probability bound on $\sum_{t=1}^{T}\left\Vert \nabla f(x_{t})\right\Vert ^{2}$.
Our main result controls the sum over $t$ of $\frac{L\eta^{2}}{2}\left\Vert \theta_{t}\right\Vert ^{2}+\left(L\eta^{2}-\eta\right)\left\langle \nabla f(x_{t}),\theta_{t}^{u}\right\rangle +\frac{\eta}{2}\left\Vert \theta_{t}^{b}\right\Vert ^{2}$
for all $t$. We inductively upper bound them with $\Delta_{1}$ with
high probability. 
\begin{lem}
\label{lem:nonconvex-induction}For $1\le N\le T+1$, let $E_{N}$
be the event that for all $k=1,\dots N$, 
\begin{align*}
\frac{L\eta^{2}}{2}\sum_{t=1}^{k-1}\left\Vert \theta_{t}\right\Vert ^{2}+\left(L\eta^{2}-\eta\right)\sum_{t=1}^{k-1}\left\langle \nabla f(x_{t}),\theta_{t}^{u}\right\rangle +\frac{\eta}{2}\left\Vert \theta_{t}^{b}\right\Vert ^{2} & \le\Delta_{1}.
\end{align*}
Then $E_{N}$ happens with probability at least $1-\frac{(N-1)\delta}{T}$
for each $N\in[T+1]$.
\end{lem}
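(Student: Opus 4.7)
I would proceed by induction on $N$. The base case $N=1$ is immediate since the sums defining $E_1$ are empty. For the inductive step, assume $P(E_N) \geq 1 - (N-1)\delta/T$; since $E_{N+1} \subseteq E_N$, a union bound reduces the task to showing that the extra condition imposed by $E_{N+1}$ fails with conditional probability at most $\delta/T$ on $E_N$.

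The first key step is to turn the inductive hypothesis into a pointwise bound on the true gradients. Under $E_N$, combining the stochastic-terms bound with the summed basic inequality~(\ref{eq:sum-nonconvex-basic}), the Young-style estimate $\|\theta_t\|^2 \leq 2\|\theta_t^u\|^2 + 2\|\theta_t^b\|^2$, and $L\eta \leq 1$ from Lemma~\ref{lem:parameter-property-nonconvex}, yields $\Delta_k = O(\Delta_1)$ for every $k \leq N$. Since $f$ is $L$-smooth with lower bound $f^*$, the standard consequence $\|\nabla f(x_t)\|^2 \leq 2L\Delta_t$ then gives $\|\nabla f(x_t)\| \leq \lambda/2$ thanks to the parameter choice $\lambda \geq 4\sqrt{L\Delta_1}$. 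With this gradient bound in hand, Lemma~\ref{lem:Gorbunov-F5-p} provides $\|\theta_t^b\| \leq 4\sigma^p\lambda^{1-p}$ and $\E_t[\|\theta_t^u\|^2] \leq 16\sigma^p\lambda^{2-p}$ for all $t \leq N$.

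I would then split the three quantities defining $E_{N+1}$. The bias sum $\frac{\eta}{2}\sum\|\theta_t^b\|^2$ is bounded deterministically via Lemma~\ref{lem:Gorbunov-F5-p}; the variance sum $\frac{L\eta^2}{2}\sum\|\theta_t\|^2$ is Young-decomposed into its conditional means (controlled as above) plus a martingale deviation $\sum\bigl(\|\theta_t^u\|^2 - \E_t[\|\theta_t^u\|^2]\bigr)$; and the cross term $(L\eta^2-\eta)\sum\langle\nabla f(x_t),\theta_t^u\rangle$ is itself a martingale. The deterministic and conditional-mean pieces are shown to be at most a constant fraction of $\Delta_1$ by invoking the parameter inequalities in Lemma~\ref{lem:parameter-property-nonconvex}, in particular $TL(\sigma/\lambda)^p\lambda^2\eta^2 \leq \Delta_1/2048$ and $(\sigma/\lambda)^p T^{p/(3p-2)} \leq 1/32$.

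The main obstacle is the application of Freedman's inequality to the two martingale pieces, because the almost-sure bounds $|\langle\nabla f(x_t),\theta_t^u\rangle| \leq \lambda^2$ and $\bigl|\|\theta_t^u\|^2 - \E_t[\|\theta_t^u\|^2]\bigr| \leq 4\lambda^2$, as well as the matching conditional variance bounds, rely on $\|\nabla f(x_t)\| \leq \lambda/2$, which is only guaranteed on $E_N$ rather than almost surely. Following the strategy highlighted in the introduction, I would apply Freedman's inequality not to the martingales themselves but to surrogate sequences obtained by multiplying each martingale difference by the $\mathcal{F}_{t-1}$-measurable indicator $\mathbf{1}\{\|\nabla f(x_t)\|\leq \lambda/2\}$; these surrogates are a.s.\ bounded with the required variance control, and they coincide with the original martingales on $E_N$. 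Two applications of Freedman's inequality, each with failure probability $\delta/(2T)$, combined with the deterministic estimates from the previous paragraph, then yield the desired $\delta/T$ conditional bound and close the induction.
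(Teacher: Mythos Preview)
Your proposal is correct and follows essentially the same approach as the paper: induction on $N$, using $E_N$ to bound $\Delta_k$ and hence $\|\nabla f(x_k)\|$, then invoking Lemma~\ref{lem:Gorbunov-F5-p} and the parameter estimates of Lemma~\ref{lem:parameter-property-nonconvex} to control the deterministic pieces, and finally two applications of Freedman's inequality to the martingale pieces via $\mathcal{F}_{t-1}$-measurable surrogates. The only cosmetic differences are that the paper's surrogate uses the indicator $\mathbf{1}\{\Delta_t \le 2\Delta_1\}$ rather than $\mathbf{1}\{\|\nabla f(x_t)\|\le\lambda/2\}$, and for the $\|\theta_t^u\|^2$ martingale the paper exploits the ``or'' structure of Freedman directly (since $\|\theta_t^u\|\le 2\lambda$ holds unconditionally) rather than introducing a surrogate; both variants lead to the same bound.
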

The key ingredient to prove this lemma is to ensure that the true
gradient $\left\Vert \nabla f(x_{t})\right\Vert ^{2}$ is bounded
by $\Delta_{1}$ with high probability, using the fact that $\left\Vert \nabla f(x_{t})\right\Vert \le\sqrt{2L\Delta_{t}}$
due to the smoothness of $f$ (see Lemma \ref{lem:smooth-prop}),
and our induction to control $\Delta_{t}$. With a bound on $\left\Vert \nabla f(x_{t})\right\Vert $,
we can use the tools from Lemma \ref{lem:Gorbunov-F5-p} and Freedman's
inequality to control the iterates. 

\begin{proof}[Proof of Lemma \ref{lem:nonconvex-induction}]
We will prove by induction on $N$ that $E_{N}$ happens with probability
at least $1-\frac{(N-1)\delta}{T}$. For $N=1$, the event happens
with probability $1$. Suppose that for some $N\le T$, $\Pr\left[E_{N}\right]\ge1-\frac{(N-1)\delta}{T}$.
We will prove that $\Pr\left[E_{N+1}\right]\ge1-\frac{N\delta}{T}$.
We have the LHS of (\ref{eq:clipped-sgd-basic-inequality-nonconvex})
is non-negative, hence for $k\le N$ we have, under the event $E_{N}$
\begin{align*}
\Delta_{k} & \le\Delta_{1}+\left(L\eta^{2}-\eta\right)\sum_{t=1}^{k-1}\left\langle \nabla f(x_{t}),\theta_{t}^{u}\right\rangle +L\eta^{2}\sum_{t=1}^{k-1}\left\Vert \theta_{t}^{u}\right\Vert ^{2}+\frac{3\eta}{2}\left\Vert \theta_{t}^{b}\right\Vert ^{2}\le2\Delta_{1}.
\end{align*}
 Recall that since
\begin{align*}
 & \theta_{t}^{u}=\tn(x_{t})-\E_{t}\left[\tn(x_{t})\right];\quad\mbox{and }\theta_{t}^{b}=\E_{t}\left[\tn(x_{t})\right]-\nabla f(x_{t})
\end{align*}
we have $\theta_{t}=\theta_{t}^{u}+\theta_{t}^{b}$, thus $\left\Vert \theta_{t}\right\Vert ^{2}\le2\left\Vert \theta_{t}^{u}\right\Vert ^{2}+2\left\Vert \theta_{t}^{b}\right\Vert ^{2}$.
We can write
\begin{align*}
 & \left(L\eta^{2}-\eta\right)\sum_{t=1}^{N}\left\langle \nabla f(x_{t}),\theta_{t}^{u}\right\rangle +\frac{3\eta}{2}\sum_{t=1}^{N}\left\Vert \theta_{t}^{b}\right\Vert ^{2}+L\eta^{2}\sum_{t=1}^{N}\left\Vert \theta_{t}^{u}\right\Vert ^{2}\\
\leq & \underbrace{\left(\eta-L\eta^{2}\right)\sum_{t=1}^{N}\left\langle -\nabla f(x_{t}),\theta_{t}^{u}\right\rangle }_{A}+\underbrace{\frac{3\eta}{2}\sum_{t=1}^{N}\left\Vert \theta_{t}^{b}\right\Vert ^{2}}_{B}\\
 & +\underbrace{L\eta^{2}\sum_{t=1}^{N}\left(\left\Vert \theta_{t}^{u}\right\Vert ^{2}-\E_{t}\left[\left\Vert \theta_{t}^{u}\right\Vert ^{2}\right]\right)}_{C}+\underbrace{L\eta^{2}\sum_{t=1}^{N}\E_{t}\left[\left\Vert \theta_{t}^{u}\right\Vert ^{2}\right]}_{D}
\end{align*}
We proceed to bound terms $B,D$ first for they are straightforward
from Lemma \ref{lem:Gorbunov-F5-p}. Then we will bound $A$ and $C$
using Freedman's inequality. First, with probability $1$, we have
$\left\Vert \theta_{t}^{u}\right\Vert \le2\lambda$. Further, when
the event $E_{N}$ happens, and by the smoothness of $f$, we have
\begin{align*}
\left\Vert \nabla f(x_{t})\right\Vert  & \le\sqrt{2L\Delta_{t}}\le\sqrt{4L\Delta_{1}}\le\frac{\lambda}{2}.
\end{align*}
Thus we can apply Lemma \ref{lem:Gorbunov-F5-p} and obtain $\left\Vert \theta_{t}^{b}\right\Vert \le4\sigma^{p}\lambda^{1-p}$
and $\E_{t}\left[\left\Vert \theta_{t}^{u}\right\Vert ^{2}\right]\le16\sigma^{p}\lambda^{2-p}.$

\paragraph{Upperbound for $B$.}

By (\ref{eq:2-p}), when the event $E_{N}$ happens, by Lemma \ref{lem:parameter-property-nonconvex}
\begin{align*}
B & =\frac{3\eta}{2}\left\Vert \theta_{t}^{b}\right\Vert ^{2}\le\frac{3\eta}{2}\sum_{t=1}^{N}16\sigma^{2p}\lambda^{2-2p}=24\sigma^{2p}\lambda^{2-2p}\eta N\\
 & \le24T\left(\frac{\sigma}{\lambda}\right)^{2p}\lambda^{2}\eta\le24TL\left(\frac{\sigma}{\lambda}\right)^{p}\lambda^{2}\eta^{2}\le\frac{3\Delta_{1}}{256}.
\end{align*}

\paragraph{Upperbound for $D$.}

By \ref{eq:3-p}, when the event $E_{N}$ happens,
\begin{align*}
D & =L\eta^{2}\sum_{t=1}^{N}\E_{t}\left[\left\Vert \theta_{t}^{u}\right\Vert ^{2}\right]\le L\eta^{2}\sum_{t=1}^{N}16\sigma^{p}\lambda^{2-p}\\
 & \le16\sigma^{p}\lambda^{2-p}L\eta^{2}N\le16LT\left(\frac{\sigma}{\lambda}\right)^{p}\left(\lambda\eta\right)^{2}\le\frac{\Delta_{1}}{128}.
\end{align*}
To bound $A$ and $C$ we use Freedman's inequality (Theorem \ref{thm:freedman}).
We define, for $t\ge1$, the following random variables
\begin{align*}
Z_{t} & =\begin{cases}
-\nabla f(x_{t}) & \mbox{if }\Delta_{t}\le2\Delta_{1}\\
0 & \mbox{otherwise}.
\end{cases}
\end{align*}
By the smoothness of $f$ and the fact that $f$ is bounded below,
we have (Lemma \ref{lem:smooth-prop}) $\left\Vert \nabla f(x_{t})\right\Vert \le\sqrt{2L\Delta_{t}}.$
Thus with probability $1$, $\left\Vert Z_{t}\right\Vert \le2\sqrt{L\Delta_{1}}$.

\paragraph{Upperbound for $A$.}

Instead of bounding $A=\left(\eta-L\eta^{2}\right)\sum_{t=1}^{N}\left\langle -\nabla f(x_{t}),\theta_{t}^{u}\right\rangle $,
we will bound $A'=\left(\eta-L\eta^{2}\right)\sum_{t=1}^{N}\left\langle Z_{t},\theta_{t}^{u}\right\rangle $.
We check the conditions to apply Freedman's inequality. First $\E_{t}\left[\left(\eta-L\eta^{2}\right)\left\langle Z_{t},\theta_{t}^{u}\right\rangle \right]=0$.
Further, with probability $1$, $\left\Vert \theta_{t}^{u}\right\Vert ^{2}\le2\lambda$,
and $Z_{t}\le2\sqrt{L\Delta_{1}}$, thus$\left|\left(\eta-L\eta^{2}\right)\left\langle Z_{t},\theta_{t}^{u}\right\rangle \right|\le\left(\eta-L\eta^{2}\right)\left\Vert Z_{t}\right\Vert \left\Vert \theta_{t}^{u}\right\Vert \le4\sqrt{L\Delta_{1}}\left(\eta-L\eta^{2}\right)\lambda\le4\sqrt{L\Delta_{1}}\eta\lambda$.
Hence, $\left\{ \left(\eta-L\eta^{2}\right)\left\langle Z_{t},\theta_{t}^{u}\right\rangle \right\} $
is a bounded martingale difference sequence. Therefore, for constant
$a$ and $F$ to be chosen we have
\begin{align*}
 & \Pr\left[\left|\sum_{t=1}^{N}\left(\eta-L\eta^{2}\right)\left\langle Z_{t},\theta_{t}^{u}\right\rangle \right|>a\mbox{ and }\sum_{t=1}^{N}\E_{t}\left[\left(\left(\eta-L\eta^{2}\right)\left\langle Z_{t},\theta_{t}^{u}\right\rangle \right)^{2}\right]\le F\ln\frac{4T}{\delta}\right]\\
 & \le2\exp\left(-\frac{a^{2}}{2F\ln\frac{4T}{\delta}+\frac{8}{3}\sqrt{L\Delta_{1}}\eta\lambda a}\right)
\end{align*}
We choose $a$ such that 
\begin{align*}
2\exp\left(-\frac{a^{2}}{2F\ln\frac{4T}{\delta}+\frac{8}{3}\sqrt{L\Delta_{1}}\eta\lambda a}\right) & =\frac{\delta}{2T}
\end{align*}
which gives 
\begin{align*}
a & =\left(\frac{4}{3}\sqrt{L\Delta_{1}}\eta\lambda+\sqrt{\frac{16L\Delta_{1}\eta^{2}\lambda^{2}}{9}+2F}\right)\ln\frac{4T}{\delta}.
\end{align*}
If we choose $F=64L\Delta_{1}\sigma^{p}\lambda^{2-p}\eta^{2}T$, by
Lemma \ref{lem:parameter-property-nonconvex} we can easily show that
$a\le\frac{7\Delta_{1}}{12}$. Therefore with probability at least
$1-\frac{\delta}{2T}$ we the following event happens 
\begin{align*}
E_{A} & =\Bigg\{\text{either }A'\le\left|\sum_{t=1}^{N}\left(\eta-L\eta^{2}\right)\left\langle Z_{t},\theta_{t}^{u}\right\rangle \right|\le\frac{7\Delta_{1}}{12}\\
 & \text{or }\sum_{t=1}^{N}\E_{t}\left[\left(\left(\eta-L\eta^{2}\right)\left\langle Z_{t},\theta_{t}^{u}\right\rangle \right)^{2}\right]\ge F\ln\frac{4T}{\delta}\Bigg\}
\end{align*}
Also notice that under the event $E_{N}$, we have
\begin{align}
 & \sum_{t=1}^{N}\E_{t}\left[\left(\left(\eta-L\eta^{2}\right)\left\langle Z_{t},\theta_{t}^{u}\right\rangle \right)^{2}\right]\nonumber \\
\le & \eta^{2}\sum_{t=1}^{N}\E_{t}\left[\left\Vert Z_{t}\right\Vert ^{2}\left\Vert \theta_{t}^{u}\right\Vert ^{2}\right]\le4\eta^{2}L\Delta_{1}\sum_{t=1}^{N}\E_{t}\left[\left\Vert \theta_{t}^{u}\right\Vert ^{2}\right]\nonumber \\
\le & 64L\Delta_{1}\sigma^{p}\lambda^{2-p}\eta^{2}N\leq64\Delta_{1}LT\left(\frac{\sigma}{\lambda}\right)^{p}\lambda^{2}\eta^{2}\le F\le F\ln\frac{4T}{\delta}.\label{eq:clipped-sgd-A-variance-bound-p-nonconvex}
\end{align}
Besides under the condition that $E_{N}$ happens, $Z_{t}=-\nabla f(x_{t})$
for all $t\le N$. Therefore, when $E_{N}\cap E_{A}$ happens, we
have $A=A'\le\frac{7\Delta_{1}}{12}.$

\paragraph{Upperbound for $C$.}

We check the conditions to apply Freedman's inequality. First, $\E_{t}\left[L\eta^{2}\left(\left\Vert \theta_{t}^{u}\right\Vert ^{2}-\E_{t}\left[\left\Vert \theta_{t}^{u}\right\Vert ^{2}\right]\right)\right]=0$.
Further, with probability $1$, $\left\Vert \theta_{t}^{u}\right\Vert ^{2}\le2\lambda$,
thus$\left|L\eta^{2}\left(\left\Vert \theta_{t}^{u}\right\Vert ^{2}-\E_{t}\left[\left\Vert \theta_{t}^{u}\right\Vert ^{2}\right]\right)\right|\le L\eta^{2}\left(4\lambda^{2}+4\lambda^{2}\right)=8L\lambda^{2}\eta^{2}$.
Hence $\left\{ L\eta^{2}\left(\left\Vert \theta_{t}^{u}\right\Vert ^{2}-\E_{t}\left[\left\Vert \theta_{t}^{u}\right\Vert ^{2}\right]\right)\right\} $
is a bounded martingale difference sequence. Applying Freedman's inequality
for constants $c$ and $G$ to be chosen, we have
\begin{align*}
 & \Pr\left[\left|L\eta^{2}\sum_{t=1}^{N}\left(\left\Vert \theta_{t}^{u}\right\Vert ^{2}-\E_{t}\left[\left\Vert \theta_{t}^{u}\right\Vert ^{2}\right]\right)\right|>c\mbox{ and }\sum_{t=1}^{N}\E_{t}\left[\left(L\eta^{2}\left(\left\Vert \theta_{t}^{u}\right\Vert ^{2}-\E_{t}\left[\left\Vert \theta_{t}^{u}\right\Vert ^{2}\right]\right)\right)^{2}\right]\le G\ln\frac{4T}{\delta}\right]\\
 & \le2\exp\left(-\frac{c^{2}}{2G\ln\frac{4T}{\delta}+\frac{16}{3}L\lambda^{2}\eta^{2}c}\right)
\end{align*}
We choose $c$ such that 
\begin{align*}
2\exp\left(-\frac{c^{2}}{2G\ln\frac{4T}{\delta}+\frac{16}{3}L\lambda^{2}\eta^{2}c}\right) & =\frac{\delta}{2T}
\end{align*}
which gives 
\begin{align*}
c & =\left(\frac{8}{3}L\lambda^{2}\eta^{2}+\sqrt{\frac{64L^{2}\lambda^{4}\eta^{4}}{9}+2G}\right)\ln\frac{4T}{\delta}
\end{align*}
If we choose $G=256L^{2}\sigma^{p}\lambda^{4-p}\eta^{4}T$, by Lemma
\ref{lem:parameter-property-nonconvex}, a simple calculation shows
that $a\le\frac{7\Delta_{1}}{48}$. This means with probability at
least $1-\frac{\delta}{2T}$, the following event happens 
\begin{align*}
E_{C} & =\Bigg\{\text{either }C\le\left|L\eta^{2}\sum_{t=1}^{N}\left(\left\Vert \theta_{t}^{u}\right\Vert ^{2}-\E_{t}\left[\left\Vert \theta_{t}^{u}\right\Vert ^{2}\right]\right)\right|\le\frac{7\Delta_{1}}{48}\\
 & \text{or }\sum_{t=1}^{N}\E_{t}\left[\left(L\eta^{2}\left(\left\Vert \theta_{t}^{u}\right\Vert ^{2}-\E_{t}\left[\left\Vert \theta_{t}^{u}\right\Vert ^{2}\right]\right)\right)^{2}\right]\ge G\ln\frac{4T}{\delta}\Bigg\}
\end{align*}
Notice that when $G=256L^{2}\sigma^{p}\lambda^{4-p}\eta^{4}T$, under
$E_{N}$ we have 
\begin{align}
 & \sum_{t=1}^{N}\E_{t}\left[\left(L\eta^{2}\left(\left\Vert \theta_{t}^{u}\right\Vert ^{2}-\E_{t}\left[\left\Vert \theta_{t}^{u}\right\Vert ^{2}\right]\right)\right)^{2}\right]\nonumber \\
\le & 8L\lambda^{2}\eta^{2}\sum_{t=1}^{N}\E_{t}\left[\left|L\eta^{2}\left(\left\Vert \theta_{t}^{u}\right\Vert ^{2}-\E_{t}\left[\left\Vert \theta_{t}^{u}\right\Vert ^{2}\right]\right)\right|\right]\le16L^{2}\lambda^{2}\eta^{4}\sum_{t=1}^{N}\E\left[\left\Vert \theta_{t}^{u}\right\Vert ^{2}\right]\nonumber \\
\le & 256L^{2}\sigma^{p}\lambda^{4-p}\eta^{4}N\le G<G\ln\frac{4T}{\delta}.\label{eq:clipped-sgd-C-variance-bound-p-nonconvex}
\end{align}
Therefore, when $E_{N}\cap E_{C}$ happens, we have $C\le\frac{7\Delta_{1}}{48}.$

\paragraph{Combining the bounds}

Finally, we have that the event $E_{N}\cap E_{A}\cap E_{C}$ implies
\begin{align*}
A\le\frac{7\Delta_{1}}{12};\quad B\le\frac{3\Delta_{1}}{256};\quad C\le\frac{7\Delta_{1}}{48};\quad D\le\frac{\Delta_{1}}{128}
\end{align*}
which means 
\begin{align*}
\frac{L\eta^{2}}{2}\sum_{t=1}^{N}\left\Vert \theta_{t}\right\Vert ^{2}+\left(L\eta^{2}-\eta\right)\sum_{t=1}^{N}\left\langle \nabla f(x_{t}),\theta_{t}\right\rangle  & \le A+B+C+D\le\Delta_{1}
\end{align*}
Therefore
\begin{align*}
\Pr\left[E_{N+1}\right] & \ge\Pr\left[E_{N}\cap E_{A}\cap E_{C}\right]=1-\Pr\left[\overline{E}_{N}\cup\overline{E}_{A}\cup\overline{E}_{C}\right]\\
 & \ge1-\frac{(N-1)\delta}{T}-\frac{\delta}{2T}-\frac{\delta}{2T}=1-\frac{N\delta}{T}
\end{align*}
which is what we need to prove.

\end{proof}

\begin{proof}[Proof of Theorem \ref{thm:clipped-sgd-convergence-p-nonconvex}]
To conclude the proof of Theorem \ref{thm:clipped-sgd-convergence-p-nonconvex},
we use Lemma \ref{lem:nonconvex-induction} to see that with probability
at least $1-\delta$,
\begin{align*}
\frac{\eta}{2}\sum_{t=1}^{T}\left\Vert \nabla f(x_{t})\right\Vert ^{2} & \le\Delta_{1}-\Delta_{T+1}+L\eta^{2}\sum_{t=1}^{T}\left\Vert \theta_{t}^{u}\right\Vert ^{2}+\left(L\eta^{2}-\eta\right)\sum_{t=1}^{T}\left\langle \nabla f(x_{t}),\theta_{t}^{u}\right\rangle +\frac{3\eta}{2}\left\Vert \theta_{t}^{b}\right\Vert ^{2}\le2\Delta_{1}
\end{align*}
By the choice of the step size
\begin{align*}
\eta & =\frac{\sqrt{\Delta_{1}}}{8\sqrt{L}\ln\frac{4T}{\delta}}\min\left\{ \left(\frac{8\ln\frac{4T}{\delta}}{\sqrt{L\Delta_{1}}}\right)^{\frac{-1}{p-1}}T^{\frac{-p}{3p-2}}\sigma^{\frac{-p}{p-1}};\frac{T^{\frac{1-p}{3p-2}}}{4\sqrt{L\Delta_{1}}};\frac{T^{\frac{-p}{3p-2}}}{32^{1/p}\sigma}\right\} 
\end{align*}
we have with probability at least $1-\delta$
\begin{align*}
 & \frac{1}{T}\sum_{t=1}^{T}\left\Vert \nabla f(x_{t})\right\Vert ^{2}\le\frac{4\Delta_{1}}{\eta T}\\
\le & 32\sqrt{\Delta_{1}L}\ln\frac{4T}{\delta}\max\left\{ \left(\frac{8\ln\frac{4T}{\delta}}{\sqrt{L\Delta_{1}}}\right)^{\frac{1}{p-1}}T^{\frac{2-2p}{3p-2}}\sigma^{\frac{p}{p-1}};4\sqrt{L\Delta_{1}}T^{\frac{1-2p}{3p-2}};32^{1/p}\sigma T^{\frac{2-2p}{3p-2}}\right\} .
\end{align*}
\end{proof}

\section{Conclusion}
In this work, we show that clipped-SGD, under appropriate parameter choices,  converges optimally with high probability for convex and non-convex objectives.  Our general analysis framework is applicable in both settings. A direct extension of this work would be analyzing the convergence of (accelerated) gradient clipping methods for strongly convex functions and variational inequalities, which has been examined in the works of \citep{gorbunov2020stochastic,gorbunov2022clipped} for noises with bounded variance. For future works, it would be interesting to investigate adaptive methods like Adagrad under heavy-tailed noise, since gradient normalization is built-in for these adaptive methods. 

\bibliographystyle{plainnat}
\bibliography{ref}

\appendix

\section{Proof from Section \ref{sec:Algorithm}\label{sec:Proof-for-sec-algorithm}}

\begin{proof}[Proof of Lemma \ref{lem:Gorbunov-F5-p}]
The proof of this lemma is an extension of Lemma F5 from \citet{gorbunov2020stochastic}.

\paragraph{For (\ref{eq:1-p})}

By definition, $\left\Vert \tn(x_{t})\right\Vert \le\lambda_{t}$

\begin{align*}
\left\Vert \theta_{t}^{u}\right\Vert  & =\left\Vert \tn(x_{t})-\E_{t}\left[\tn(x_{t})\right]\right\Vert \\
 & \le\left\Vert \tn(x_{t})\right\Vert +\left\Vert \E_{t}\left[\tn(x_{t})\right]\right\Vert \le2\lambda_{t}.
\end{align*}

\paragraph{For (\ref{eq:2-p})}

Let 
\begin{align*}
\chi_{t} & =\indicator\left\{ \left\Vert \hn(x_{t})\right\Vert >\lambda_{t}\right\} \\
\mu_{t} & =\indicator\left\{ \left\Vert \hn(x_{t})-\nabla f(x_{t})\right\Vert >\frac{\lambda_{t}}{2}\right\} 
\end{align*}
By the assumption $\left\Vert \nabla f(x_{t})\right\Vert \le\frac{\lambda_{t}}{2}$,
we have 
\begin{align*}
\left\Vert \hn(x_{t})\right\Vert  & \le\left\Vert \hn(x_{t})-\nabla f(x_{t})\right\Vert +\left\Vert \nabla f(x_{t})\right\Vert \\
 & \le\left\Vert \hn(x_{t})-\nabla f(x_{t})\right\Vert +\frac{\lambda_{t}}{2}
\end{align*}
Hence $\chi_{t}\le\mu_{t}$. We write 
\begin{align}
\tn(x_{t}) & =\hn(x_{t})(1-\chi_{t})+\frac{\lambda_{t}}{\left\Vert \hn(x_{t})\right\Vert }\hn(x_{t})\chi_{t}\label{eq:clipped-gradient-rewrite-1-p}\\
 & =\hn(x_{t})+\left(\frac{\lambda_{t}}{\left\Vert \hn(x_{t})\right\Vert }-1\right)\hn(x_{t})\chi_{t}\label{eq:clipped-gradient-rewrite-2-p}
\end{align}
Hence 
\begin{align*}
\left\Vert \theta_{t}^{b}\right\Vert  & =\left\Vert \E_{t}\left[\tn(x_{t})\right]-\nabla f(x_{t})\right\Vert \\
 & =\left\Vert \E_{t}\left[\left(\frac{\lambda_{t}}{\left\Vert \hn(x_{t})\right\Vert }-1\right)\hn(x_{t})\chi_{t}\right]\right\Vert \\
 & \le\E_{t}\left[\left\Vert \hn(x_{t})\right\Vert \left|\frac{\lambda_{t}}{\left\Vert \hn(x_{t})\right\Vert }-1\right|\chi_{t}\right]\\
 & =\E_{t}\left[\left\Vert \hn(x_{t})\right\Vert \left(1-\frac{\lambda_{t}}{\left\Vert \hn(x_{t})\right\Vert }\right)\chi_{t}\right]\\
 & \le\E_{t}\left[\left\Vert \hn(x_{t})\right\Vert \chi_{t}\right]\\
 & \le\E_{t}\left[\left\Vert \hn(x_{t})\right\Vert \mu_{t}\right]\\
 & \overset{(*)}{\le}\E_{t}\left[\left\Vert \hn(x_{t})-\nabla f(x_{t})\right\Vert \mu_{t}\right]+\left\Vert \nabla f(x_{t})\right\Vert \E_{t}\left[\mu_{t}\right]\\
 & \overset{(**)}{\le}\E_{t}\left[\left\Vert \hn(x_{t})-\nabla f(x_{t})\right\Vert ^{p}\right]^{\frac{1}{p}}\E_{t}\left[\mu_{t}^{\frac{p}{p-1}}\right]^{\frac{p-1}{p}}+\left\Vert \nabla f(x_{t})\right\Vert \E_{t}\left[\mu_{t}\right]\\
 & \le\sigma\E_{t}\left[\mu_{t}\right]^{\frac{p-1}{p}}+\frac{\lambda_{t}}{2}\E_{t}\left[\mu_{t}\right]
\end{align*}
For $(*)$ we use the triangle inequality; for $(**)$ we use Holder's
inequality. To bound $\E_{t}\left[\mu_{t}\right]$, by Markov's inequality
\begin{align*}
\E_{t}\left[\mu_{t}\right] & =\Pr\left[\left\Vert \hn(x_{t})-\nabla f(x_{t})\right\Vert >\frac{\lambda_{t}}{2}\mid\F_{t}\right]\\
 & \le\frac{\E_{t}\left[\left\Vert \hn(x_{t})-\nabla f(x_{t})\right\Vert ^{p}\right]}{(\lambda_{t}/2)^{p}}\\
 & \le\frac{2^{p}\sigma^{p}}{\lambda_{t}^{p}}
\end{align*}
Hence, we have 
\begin{align*}
\left\Vert \E_{t}\left[\tn(x_{t})\right]-\nabla f(x_{t})\right\Vert  & \le\sigma\frac{2^{p-1}\sigma^{p-1}}{\lambda_{t}^{p-1}}+\frac{\lambda_{t}}{2}\frac{2^{p}\sigma^{p}}{\lambda_{t}^{p}}\\
 & =2^{p}\sigma^{p}\lambda_{t}^{1-p}\le4\sigma^{p}\lambda_{t}^{1-p}.
\end{align*}

\paragraph{For (\ref{eq:3-p})}

First we will prove that 
\begin{align*}
\E_{t}\left[\left\Vert \tn(x_{t})-\nabla f(x_{t})\right\Vert ^{2}\right] & \le16\sigma^{p}\lambda_{t}^{2-p}.
\end{align*}
From \ref{eq:clipped-gradient-rewrite-1-p}

\begin{align*}
\sqrt{\E_{t}\left[\left\Vert \tn(x_{t})-\nabla f(x_{t})\right\Vert ^{2}\right]} & =\sqrt{\E_{t}\left[\left\Vert \left(\frac{\lambda_{t}\hn(x_{t})}{\left\Vert \hn(x_{t})\right\Vert }-\nabla f(x_{t})\right)\chi_{t}+\left(\hn(x_{t})-\nabla f(x_{t})\right)\right\Vert ^{2}(1-\chi_{t})\right]}\\
 & \overset{(a)}{\le}\sqrt{\E_{t}\left[\left\Vert \frac{\lambda_{t}\hn(x_{t})}{\left\Vert \hn(x_{t})\right\Vert }-\nabla f(x_{t})\right\Vert ^{2}\chi_{t}^{2}\right]}+\sqrt{\E_{t}\left[\left\Vert \hn(x_{t})-\nabla f(x_{t})\right\Vert ^{2}(1-\chi_{t})^{2}\right]}\\
 & \overset{(b)}{\le}\sqrt{\E_{t}\left[\left(\left\Vert \frac{\lambda_{t}\hn(x_{t})}{\left\Vert \hn(x_{t})\right\Vert }\right\Vert +\left\Vert \nabla f(x_{t})\right\Vert \right)^{2}\mu_{t}\right]}\\
 & \quad+\sqrt{\E_{t}\left[\left\Vert \hn(x_{t})-\nabla f(x_{t})\right\Vert ^{p}\left(\left\Vert \hn(x_{t})\right\Vert +\left\Vert \nabla f(x_{t})\right\Vert \right)^{2-p}(1-\chi_{t})\right]}\\
 & \overset{(c)}{\le}\frac{3}{2}\lambda_{t}\sqrt{\E_{t}\left[\mu_{t}\right]}+\left(\frac{3}{2}\lambda\right)^{\frac{2-p}{2}}\sqrt{\E_{t}\left[\left\Vert \hn(x_{t})-\nabla f(x_{t})\right\Vert ^{p}\right]}\\
 & \le\frac{3}{2}\times2^{\frac{p}{2}}\lambda_{t}^{\frac{2-p}{2}}\sigma^{\frac{p}{2}}+\left(\frac{3}{2}\right)^{\frac{2-p}{2}}\lambda_{t}^{\frac{2-p}{2}}\sigma^{\frac{p}{2}}\\
 & =\frac{3}{2}\left(2^{\frac{p}{2}}+\left(\frac{3}{2}\right)^{-\frac{p}{2}}\right)\lambda_{t}^{\frac{2-p}{2}}\sigma^{\frac{p}{2}}\\
 & \overset{(d)}{\le}4\lambda_{t}^{\frac{2-p}{2}}\sigma^{\frac{p}{2}}
\end{align*}
For $(a)$ we use $\sqrt{a+b}\le\sqrt{a}+\sqrt{b}$. For $(b)$ we
use $\left\Vert a+b\right\Vert \le\left\Vert a\right\Vert +\left\Vert b\right\Vert $.
For $(c)$ we use $\left\Vert \nabla f(x_{t})\right\Vert \le\frac{\lambda_{t}}{2}$
and
\[
\left(\left\Vert \hn(x_{t})\right\Vert +\left\Vert \nabla f(x_{t})\right\Vert \right)^{2-p}(1-\chi_{t})\le\left(\frac{3}{2}\lambda_{t}\right)^{2-p}
\]
 since $\chi_{t}=\indicator\left\{ \left\Vert \hn(x_{t})\right\Vert >\lambda\right\} $.
For $(d)$ we use that 
\[
\max_{p\in(1,2]}2^{\frac{p}{2}}+\left(\frac{3}{2}\right)^{-\frac{p}{2}}=\frac{8}{3}\mbox{ at }p=2
\]
which gives us 
\begin{align*}
\E_{t}\left[\left\Vert \tn(x_{t})-\nabla f(x_{t})\right\Vert ^{2}\right] & \le16\sigma^{p}\lambda_{t}^{2-p}.
\end{align*}
Since $\E_{t}\left[\tn(x_{t})\right]$ is the minimizer of $\E_{t}\left[\left\Vert \tn(x_{t})-x\right\Vert ^{2}\right]$

\begin{align*}
\E_{t}\left[\left\Vert \tn(x_{t})-\E_{t}\left[\tn(x_{t})\right]\right\Vert ^{2}\right] & \le\E_{t}\left[\left\Vert \tn(x_{t})-\nabla f(x_{t})\right\Vert ^{2}\right]\le16\sigma^{p}\lambda_{t}^{2-p}.
\end{align*}
\end{proof}

\section{Proof from Section \ref{sec:Convex}\label{sec:appendix-convex-pfs}}

\begin{proof}[Proof of Lemma \ref{lem:convex-basic-analysis}]
We start from the convexity of $f$ and use the update $x_{t+1}=x_{t}-\frac{1}{\eta_{t}}\tn(x_{t})$
to obtain
\begin{align*}
f(x_{t})-f^{*} & \le\left\langle \nabla f(x_{t}),x_{t}-x^{*}\right\rangle \\
 & =\left\langle \tn(x_{t}),x_{t}-x^{*}\right\rangle -\left\langle \tn(x_{t})-\nabla f(x_{t}),x_{t}-x^{*}\right\rangle \\
 & =\frac{1}{\eta_{t}}\left\langle x_{t}-x_{t+1},x_{t}-x^{*}\right\rangle -\left\langle \tn(x_{t})-\nabla f(x_{t}),x_{t}-x^{*}\right\rangle \\
 & =\frac{1}{2}\left\Vert x_{t}-x_{t+1}\right\Vert ^{2}+\frac{1}{2\eta_{t}}\left\Vert x_{t}-x^{*}\right\Vert ^{2}-\frac{1}{2\eta_{t}}\left\Vert x_{t+1}-x^{*}\right\Vert ^{2}-\left\langle \theta_{t},x_{t}-x^{*}\right\rangle \\
 & =\frac{\eta_{t}}{2}\left\Vert \tn(x_{t})\right\Vert ^{2}+\frac{1}{2\eta_{t}}\left\Vert x_{t}-x^{*}\right\Vert ^{2}-\frac{1}{2\eta_{t}}\left\Vert x_{t+1}-x^{*}\right\Vert ^{2}-\left\langle \theta_{t},x_{t}-x^{*}\right\rangle \\
 & \overset{(a)}{\le}\eta_{t}\left\Vert \nabla f(x_{t})\right\Vert ^{2}+\eta_{t}\left\Vert \theta_{t}\right\Vert ^{2}+\frac{1}{2\eta_{t}}\left\Vert x_{t}-x^{*}\right\Vert ^{2}-\frac{1}{2\eta_{t}}\left\Vert x_{t+1}-x^{*}\right\Vert ^{2}-\left\langle \theta_{t},x_{t}-x^{*}\right\rangle \\
 & \overset{(b)}{\le}2L\eta_{t}(f(x_{t})-f^{*})+\eta_{t}\left\Vert \theta_{t}\right\Vert ^{2}+\frac{1}{2\eta_{t}}\left\Vert x_{t}-x^{*}\right\Vert ^{2}-\frac{1}{2\eta_{t}}\left\Vert x_{t+1}-x^{*}\right\Vert ^{2}-\left\langle \theta_{t},x_{t}-x^{*}\right\rangle 
\end{align*}
For $(a)$, we use $\left\Vert \tn(x_{t})\right\Vert ^{2}=\left\Vert \nabla f(x_{t})+\theta_{t}\right\Vert ^{2}\le2\left\Vert \nabla f(x_{t})\right\Vert ^{2}+2\left\Vert \theta_{t}\right\Vert ^{2}$.
For $(b)$, we use the smoothness of $f$ from Lemma \ref{lem:smooth-prop}.
Rearranging the terms, we obtain 
\begin{align*}
(2\eta_{t}-4\eta_{t}^{2}L)\Delta_{t} & \le\left\Vert x_{t}-x^{*}\right\Vert ^{2}-\left\Vert x_{t+1}-x^{*}\right\Vert ^{2}+2\eta_{t}^{2}\left\Vert \theta_{t}\right\Vert ^{2}-2\eta_{t}\left\langle \theta_{t},x_{t}-x^{*}\right\rangle .
\end{align*}
Since $\eta\le\frac{1}{4L}$ we have 
\begin{align*}
\eta_{t}\Delta_{t} & \le\left\Vert x_{t}-x^{*}\right\Vert ^{2}-\left\Vert x_{t+1}-x^{*}\right\Vert ^{2}+2\eta_{t}^{2}\left\Vert \theta_{t}\right\Vert ^{2}-2\eta_{t}\left\langle \theta_{t},x_{t}-x^{*}\right\rangle .
\end{align*}
\end{proof}

\begin{proof}[Proof of Theorem \ref{thm:clipped-sgd-convergence-p-convex}]
For $1\le N\le T+1$, let $E_{N}$ be the event that for all $k=1,\dots N$,
\begin{align*}
2\eta^{2}\sum_{t=1}^{k-1}\left\Vert \theta_{t}\right\Vert ^{2}-2\eta\sum_{t=1}^{k-1}\left\langle \theta_{t},x_{t}-x^{*}\right\rangle  & \le R_{1}^{2}.
\end{align*}
We will prove by induction on $N$ that $E_{N}$ happens with probability
at least $1-\frac{(N-1)\delta}{T}$. For $N=1$, the event happens
with probability $1$. Suppose that for some $N\le T$, $\Pr\left[E_{N}\right]\ge1-\frac{(N-1)\delta}{T}$.
We will prove that $\Pr\left[E_{N+1}\right]\ge1-\frac{N\delta}{T}$.
We have the LHS of \ref{eq:clipped-sgd-basic-inequality-nonconvex}
is non-negative, hence for $k\le N$ we have, under the event $E_{N}$
\begin{align*}
R_{k}^{2} & \le R_{1}^{2}+2\eta^{2}\sum_{t=1}^{k-1}\left\Vert \theta_{t}\right\Vert ^{2}-2\eta\sum_{t=1}^{k-1}\left\langle \theta_{t},x_{t}-x^{*}\right\rangle \le2R_{1}^{2}
\end{align*}
We define the following random variables
\begin{align*}
Z_{t} & =\begin{cases}
x^{*}-x_{t} & \mbox{if }R_{t}^{2}\le2R_{1}^{2}\\
0 & \mbox{otherwise}
\end{cases}
\end{align*}
Note that $\left\Vert Z_{t}\right\Vert \le\sqrt{2}R_{1}\le2R_{1}$
with probability $1$. Recall 
\begin{align*}
\theta_{t}^{u} & =\tn(x_{t})-\E_{t}\left[\tn(x_{t})\right]\\
\theta_{t}^{b} & =\E_{t}\left[\tn(x_{t})\right]-\nabla f(x_{t})
\end{align*}
and $\theta_{t}=\theta_{t}^{u}+\theta_{t}^{b}$, thus $\left\Vert \theta_{t}\right\Vert ^{2}\le2\left\Vert \theta_{t}^{u}\right\Vert ^{2}+2\left\Vert \theta_{t}^{b}\right\Vert ^{2}$.
Also notice that when $E_{N}$ happens we have $Z_{t}=-\nabla f(x_{t})$
for every $t\le N$. Hence $E_{N}$ implies
\begin{align*}
 & 2\eta^{2}\sum_{t=1}^{N}\left\Vert \theta_{t}\right\Vert ^{2}-2\eta\sum_{t=1}^{N}\left\langle x_{t}-x^{*},\theta_{t}\right\rangle \\
\leq & \underbrace{2\eta\sum_{t=1}^{N}\left\langle x^{*}-x_{t},\theta_{t}^{u}\right\rangle }_{A}+\underbrace{2\eta\sum_{t=1}^{N}\left\langle x^{*}-x_{t},\theta_{t}^{b}\right\rangle }_{B}\\
 & +\underbrace{4\eta^{2}\sum_{t=1}^{N}\left(\left\Vert \theta_{t}^{u}\right\Vert ^{2}-\E_{t}\left[\left\Vert \theta_{t}^{u}\right\Vert ^{2}\right]\right)}_{C}+\underbrace{4\eta^{2}\sum_{t=1}^{N}\E_{t}\left[\left\Vert \theta_{t}^{u}\right\Vert ^{2}\right]}_{D}+\underbrace{4\eta^{2}\sum_{t=1}^{N}\left\Vert \theta_{t}^{b}\right\Vert ^{2}}_{E}
\end{align*}
We proceed to bound terms $B,D,E$ first for they are straightforward
from Lemma \ref{lem:Gorbunov-F5-p}, then we will bound $A$ and $C$.

First, with probability $1$, we have 
\begin{align*}
\left\Vert \theta_{t}^{u}\right\Vert  & \le2\lambda
\end{align*}
 Further, when the event $E_{N}$ happens, and by the smoothness of
$f$, we have 
\begin{align*}
\left\Vert \nabla f(x_{t})\right\Vert  & =\left\Vert \nabla f(x_{t})-\nabla f(x^{*})\right\Vert \le L\left\Vert x_{t}-x^{*}\right\Vert \le LR_{t}\le\sqrt{2}LR_{1}\le\frac{\lambda}{2}
\end{align*}
Thus we can apply Lemma \ref{lem:Gorbunov-F5-p} to obtain $\left\Vert \theta_{t}^{b}\right\Vert \le4\sigma^{p}\lambda^{1-p}$
and $\E_{t}\left[\left\Vert \theta_{t}^{u}\right\Vert ^{2}\right]\le16\sigma^{p}\lambda^{2-p}.$

\paragraph{Upperbound for $B$.}

When the event $E_{N}$ happens, $Z_{t}=-\nabla f(x_{t})$ for all
$t\le N$. Hence, by (\ref{eq:2-p}),
\begin{align*}
B & =2\eta\sum_{t=1}^{N}\left\langle Z_{t},\theta_{t}^{b}\right\rangle \le2\eta\sum_{t=1}^{N}\left\Vert Z_{t}\right\Vert \left\Vert \theta_{t}^{b}\right\Vert =2\eta\sum_{t=1}^{N}2R_{1}4\sigma^{p}\lambda^{1-p}\\
 & \le16R_{1}T\sigma^{p}\lambda^{1-p}\eta\le16R_{1}\left(\frac{\sigma}{\lambda}\right)^{p}T\frac{R_{1}}{16\ln\frac{4T}{\delta}}\le\frac{R_{1}^{2}}{16}.
\end{align*}

\paragraph{Upperbound for $D$.}

When the event $E_{N}$ happens, by (\ref{eq:3-p}),
\begin{align*}
D & =4\eta^{2}\sum_{t=1}^{N}\E_{t}\left[\left\Vert \theta_{t}^{u}\right\Vert ^{2}\right]\le4\eta^{2}\sum_{t=1}^{N}16\sigma^{p}\lambda^{2-p}\le64\sigma^{p}\lambda^{2-p}\eta^{2}T\\
 & \le64\left(\frac{\sigma}{\lambda}\right)^{p}T\left(\frac{R_{1}}{16\ln\frac{4T}{\delta}}\right)^{2}\le\frac{R_{1}^{2}}{64}.
\end{align*}

\paragraph{Upperbound for $E$.}

When the event $E_{N}$ happens, by (\ref{eq:2-p}),
\begin{align*}
E & =4\eta^{2}\sum_{0}^{N}\left\Vert \theta_{k}^{b}\right\Vert ^{2}\le4\eta^{2}\sum_{t=1}^{N}\left(4\sigma^{p}\lambda^{1-p}\right)^{2}=64\sigma^{2p}\lambda^{2-2p}\eta^{2}T\\
 & \le64\left(\frac{\sigma}{\lambda}\right)^{2p}T\left(\frac{R_{1}}{16\ln\frac{4T}{\delta}}\right)^{2}\le\frac{R_{1}^{2}}{1024}.
\end{align*}

To bound $A$ and $C$ we use Freedman's inequality (\ref{thm:freedman}).

\paragraph{Upperbound for $A$.}

Instead of bounding $A=\sum_{t=1}^{N}2\eta\left\langle x^{*}-x_{t},\theta_{t}^{u}\right\rangle $,
we will bound $A'=\sum_{t=1}^{N}2\eta\left\langle Z_{t},\theta_{t}^{u}\right\rangle $.
We check the conditions to apply Freedman's inequality. First $\E_{t}\left[2\eta\left\langle Z_{t},\theta_{t}^{u}\right\rangle \right]=0$.
Further, with probability $1$, $\left\Vert \theta_{t}^{u}\right\Vert \le2\lambda$
and $Z_{t}\le2R_{1}$, thus $\left|2\eta\left\langle Z_{t},\theta_{t}^{u}\right\rangle \right|\le2\eta\left\Vert Z_{t}\right\Vert \left\Vert \theta_{t}^{u}\right\Vert \le4R_{1}\eta\lambda$.
Hence, the sequence $\left\{ 2\eta\left\langle Z_{t},\theta_{t}^{u}\right\rangle \right\} $
is a bounded martingale difference sequence. Therefore, for constant
$a$ and $F$ to be chosen we have
\begin{align*}
 & \Pr\left[\left|\sum_{t=1}^{N}2\eta\left\langle Z_{t},\theta_{t}^{u}\right\rangle \right|>a\mbox{ and }\sum_{t=1}^{N}\E_{t}\left[\left(2\eta\left\langle Z_{t},\theta_{t}^{u}\right\rangle \right)^{2}\right]\le F\ln\frac{4T}{\delta}\right]\\
 & \le2\exp\left(-\frac{a^{2}}{2F\ln\frac{4T}{\delta}+\frac{8}{3}R_{1}\eta\lambda a}\right)
\end{align*}
We choose $b$ such that 
\begin{align*}
2\exp\left(-\frac{a^{2}}{2F\ln\frac{4T}{\delta}+\frac{8}{3}R_{1}\eta\lambda a}\right) & =\frac{\delta}{2T}
\end{align*}
which gives 
\begin{align*}
a & =\left(\frac{4}{3}R_{1}\eta\lambda+\sqrt{\frac{16R_{1}^{2}\eta^{2}\lambda^{2}}{9}+2F}\right)\ln\frac{4T}{\delta}
\end{align*}
If we choose $F=128R_{1}^{2}\sigma^{p}\lambda^{2-p}\eta^{2}T$ we
have
\begin{align*}
a & =\left(\frac{4}{3}R_{1}\eta\lambda+\sqrt{\frac{16R_{1}^{2}\eta^{2}\lambda^{2}}{9}+256R_{1}^{2}\sigma^{p}\lambda^{2-p}\eta^{2}T}\right)\ln\frac{4T}{\delta}\\
 & \le\left(\frac{4}{3}+\sqrt{\frac{16}{9}+256\left(\frac{\sigma}{\lambda}\right)^{p}T}\right)R_{1}\eta\lambda\ln\frac{4T}{\delta}\\
 & \le\left(\frac{4}{3}+\frac{4}{3}+4\right)\frac{R_{1}^{2}}{16}=\frac{5R_{1}^{2}}{12}
\end{align*}
Therefore with probability at least $1-\frac{\delta}{2T}$ we the
following event happens 
\begin{align*}
E_{A} & =\Bigg\{\text{either }A'\le\left|\sum_{t=1}^{N}2\eta\left\langle Z_{t},\theta_{t}^{u}\right\rangle \right|\le\frac{5R_{1}^{2}}{12}\\
 & \text{or }\sum_{t=1}^{N}\E_{t}\left[\left(2\eta\left\langle Z_{t},\theta_{t}^{u}\right\rangle \right)^{2}\right]\ge F\ln\frac{4T}{\delta}\Bigg\}
\end{align*}
Also notice that under the event $E_{N}$ we have 
\begin{align}
\sum_{t=1}^{N}\E_{t}\left[\left(2\eta\left\langle Z_{t},\theta_{t}^{u}\right\rangle \right)^{2}\right] & \le4\eta^{2}\sum_{t=1}^{N}\E_{t}\left[\left\Vert Z_{t}\right\Vert ^{2}\left\Vert \theta_{t}^{u}\right\Vert ^{2}\right]\le8\eta^{2}R_{1}^{2}\sum_{t=1}^{N}\E_{t}\left[\left\Vert \theta_{t}^{u}\right\Vert ^{2}\right]\nonumber \\
 & \le128R_{1}^{2}\sigma^{p}\lambda^{2-p}\eta^{2}N\le F<F\ln\frac{4T}{\delta}.\label{eq:clipped-sgd-A-variance-bound-p-convex}
\end{align}
Besides under the condition that $E_{N}$ happens, $Z_{t}=-\nabla f(x_{t})$
for all $t\le N$. Therefore, when $E_{N}\cap E_{A}$ happens, we
have $A=A'\le\frac{5R_{1}^{2}}{12}.$

\paragraph{Upperbound for $C$.}

We check the conditions to apply Freedman's inequality. First, $\E_{t}\left[4\eta^{2}\left(\left\Vert \theta_{t}^{u}\right\Vert ^{2}-\E_{t}\left[\left\Vert \theta_{t}^{u}\right\Vert ^{2}\right]\right)\right]=0$.
Further, with probability $1$, $\left\Vert \theta_{t}^{u}\right\Vert \le2\lambda$,
thus $\left|4\eta^{2}\left(\left\Vert \theta_{t}^{u}\right\Vert ^{2}-\E_{t}\left[\left\Vert \theta_{t}^{u}\right\Vert ^{2}\right]\right)\right|\le4\eta^{2}\left(4\lambda^{2}+4\lambda^{2}\right)=32\lambda^{2}\eta^{2}$.
Hence, the sequence $\left\{ 4\eta^{2}\left(\left\Vert \theta_{t}^{u}\right\Vert ^{2}-\E_{t}\left[\left\Vert \theta_{t}^{u}\right\Vert ^{2}\right]\right)\right\} $
is a bounded martingale difference sequence. Applying Freedman's inequality
for constants $c$ and $G$ to be chosen, we have
\begin{align*}
 & \Pr\left[\left|4\eta^{2}\sum_{t=1}^{N}\left(\left\Vert \theta_{t}^{u}\right\Vert ^{2}-\E_{t}\left[\left\Vert \theta_{t}^{u}\right\Vert ^{2}\right]\right)\right|>c\mbox{ and }\sum_{t=1}^{N}\E_{t}\left[\left(4\eta^{2}\left(\left\Vert \theta_{t}^{u}\right\Vert ^{2}-\E_{\xi_{t}}\left[\left\Vert \theta_{t}^{u}\right\Vert ^{2}\right]\right)\right)^{2}\right]\le G\ln\frac{4T}{\delta}\right]\\
 & \le2\exp\left(-\frac{c^{2}}{2G\ln\frac{4T}{\delta}+\frac{64}{3}\lambda^{2}\eta^{2}c}\right)
\end{align*}
We choose $c$ such that 
\begin{align*}
2\exp\left(-\frac{c^{2}}{2G\ln\frac{4T}{\delta}+\frac{64}{3}\lambda^{2}\eta^{2}c}\right) & =\frac{\delta}{2T}
\end{align*}
which gives 
\begin{align*}
c & =\left(\frac{32}{3}\lambda^{2}\eta^{2}+\sqrt{\frac{1024\lambda^{4}\eta^{4}}{9}+2G}\right)\ln\frac{4T}{\delta}
\end{align*}
if we choose $G=1024\sigma^{p}\lambda^{4-p}\eta^{4}T$ then 
\begin{align*}
c & =\left(\frac{32}{3}\lambda^{2}\eta^{2}+\sqrt{\frac{1024\lambda^{4}\eta^{4}}{9}+2048\sigma^{p}\lambda^{4-p}\eta^{4}T}\right)\ln\frac{4T}{\delta}\\
 & =\left(\frac{32}{3}+\sqrt{\frac{1024}{9}+2048\left(\frac{\sigma}{\lambda}\right)^{p}T}\right)\lambda^{2}\eta^{2}\ln\frac{4T}{\delta}\\
 & \le\left(\frac{32}{3}+\frac{32}{3}+12\right)\left(\frac{R_{1}}{16\ln\frac{4T}{\delta}}\right)^{2}\ln\frac{4T}{\delta}\le\frac{25R_{1}^{2}}{192}
\end{align*}
This means with probability at least $1-\frac{\delta}{2T}$, the following
event happens 
\begin{align*}
E_{C} & =\Bigg\{\text{either }C\le\left|4\eta^{2}\sum_{t=1}^{N}\left(\left\Vert \theta_{t}^{u}\right\Vert ^{2}-\E_{t}\left[\left\Vert \theta_{t}^{u}\right\Vert ^{2}\right]\right)\right|\le\frac{25R_{1}^{2}}{192}\\
 & \text{or }\sum_{t=1}^{N}\E_{t}\left[\left(4\eta^{2}\left(\left\Vert \theta_{t}^{u}\right\Vert ^{2}-\E_{t}\left[\left\Vert \theta_{t}^{u}\right\Vert ^{2}\right]\right)\right)^{2}\right]\ge G\ln\frac{4T}{\delta}\Bigg\}
\end{align*}
Notice that when $G=1024\sigma^{p}\lambda^{4-p}\eta^{4}T$, under
$E_{N}$ we have 
\begin{align}
 & \sum_{t=1}^{N}\E_{t}\left[\left(4\eta^{2}\left(\left\Vert \theta_{t}^{u}\right\Vert ^{2}-\E_{t}\left[\left\Vert \theta_{t}^{u}\right\Vert ^{2}\right]\right)\right)^{2}\right]\nonumber \\
\le & 32\lambda^{2}\eta^{2}\sum_{t=1}^{N}\E_{t}\left[\left|4\eta^{2}\left(\left\Vert \theta_{t}^{u}\right\Vert ^{2}-\E_{t}\left[\left\Vert \theta_{t}^{u}\right\Vert ^{2}\right]\right)\right|\right]\nonumber \\
\le & 64\lambda^{2}\eta^{4}\sum_{t=1}^{N}\E\left[\left\Vert \theta_{t}^{u}\right\Vert ^{2}\right]\le1024\sigma^{p}\lambda^{4-p}\eta^{4}N\le G<G\ln\frac{4T}{\delta}.\label{eq:clipped-sgd-C-variance-bound-p-convex}
\end{align}
This means, when $E_{N}\cap E_{C}$ happens, we must have $C\le\frac{7R_{1}^{2}}{96}$.

\paragraph{Combining the bounds}

Due to the observations in (\ref{eq:clipped-sgd-A-variance-bound-p-convex})
and (\ref{eq:clipped-sgd-C-variance-bound-p-convex}), we have that
the event $E_{N}\cap E_{A}\cap E_{C}$ implies 
\begin{align*}
A\le\frac{5R^{2}}{12};\quad B\le\frac{R^{2}}{16};\quad C\le\frac{25R_{1}^{2}}{192};\quad D\le\frac{R_{1}^{2}}{64};\quad E\le\frac{R_{1}^{2}}{1024}
\end{align*}
which means $E_{N+1}$ happens because
\begin{align*}
2\eta^{2}\sum_{t=1}^{N}\left\Vert \theta_{t}\right\Vert ^{2}-2\eta\sum_{t=1}^{N}\left\langle \theta_{t},x_{t}-x^{*}\right\rangle  & \le A+B+C+D+E\le R_{1}^{2}
\end{align*}
Therefore
\begin{align*}
\Pr\left[E_{N+1}\right] & \ge\Pr\left[E_{N}\cap E_{A}\cap E_{C}\right]=1-\Pr\left[\overline{E}_{N}\cup\overline{E}_{A}\cup\overline{E}_{C}\right]\\
 & \ge1-\frac{(N-1)\delta}{T}-\frac{\delta}{2T}-\frac{\delta}{2T}=1-\frac{N\delta}{T}
\end{align*}
which is what we need to prove.

To conclude the proof of Theorem \ref{thm:clipped-sgd-convergence-p-convex},
we have with probability at least $1-\delta$ we have 
\begin{align*}
\eta\sum_{t=1}^{T}\Delta_{t} & \le R_{1}^{2}-R_{T+1}^{2}+2\eta^{2}\sum_{t=1}^{T}\left\Vert \theta_{t}\right\Vert ^{2}-2\eta\sum_{t=1}^{T}\left\langle \theta_{t},x_{t}-x^{*}\right\rangle \le2R_{1}^{2}
\end{align*}
Finally since 
\begin{align*}
\eta & =\frac{R_{1}}{16\ln\frac{4T}{\delta}}\min\left\{ (16T)^{-1/p}\sigma^{-1};(\sqrt{2}LR_{1})^{-1}\right\} 
\end{align*}
we have with probability at least $1-\delta$
\begin{align*}
\frac{1}{T}\sum_{t=1}^{T}\Delta_{t} & \le\frac{2R_{1}^{2}}{\eta T}\le32R_{1}\ln\frac{4T}{\delta}\max\left\{ 16^{1/p}T^{\frac{1-p}{p}};\sqrt{2}LR_{1}T^{-1}\right\} .
\end{align*}
\end{proof}

\section{Proof from Section \ref{sec:Non-convex}\label{sec:Proof-for-non-convex}}

\begin{proof}[Proof of Lemma \ref{lem:nonconvex-basic-analysis}]
By the smoothness of $f$ and the update $x_{t+1}=x_{t}-\frac{1}{\eta_{t}}\tn(x_{t})$
we have

\begin{align*}
 & f(x_{t+1})-f(x_{t})\\
\le & \left\langle \nabla f(x_{t}),x_{t+1}-x_{t}\right\rangle +\frac{L}{2}\left\Vert x_{t+1}-x_{t}\right\Vert ^{2}\\
= & -\eta_{t}\left\langle \nabla f(x_{t}),\tn(x_{t})\right\rangle +\frac{L\eta_{t}^{2}}{2}\left\Vert \tn(x_{t})\right\Vert ^{2}\\
= & -\eta_{t}\left\langle \nabla f(x_{t}),\theta_{t}+\nabla f(x_{t})\right\rangle +\frac{L\eta_{t}^{2}}{2}\left\Vert \theta_{t}+\nabla f(x_{t})\right\Vert ^{2}\\
= & -\eta_{t}\left\Vert \nabla f(x_{t})\right\Vert ^{2}-\eta_{t}\left\langle \nabla f(x_{t}),\theta_{t}\right\rangle +\frac{L\eta_{t}^{2}}{2}\left\Vert \theta_{t}\right\Vert ^{2}+\frac{L\eta_{t}^{2}}{2}\left\Vert \nabla f(x_{t})\right\Vert ^{2}+L\eta_{t}^{2}\left\langle \nabla f(x_{t}),\theta_{t}\right\rangle \\
= & -\left(\eta_{t}-\frac{L\eta_{t}^{2}}{2}\right)\left\Vert \nabla f(x_{t})\right\Vert ^{2}+\frac{L\eta_{t}^{2}}{2}\left\Vert \theta_{t}\right\Vert ^{2}+\left(L\eta_{t}^{2}-\eta_{t}\right)\left\langle \nabla f(x_{t}),\theta_{t}\right\rangle \\
= & -\left(\eta_{t}-\frac{L\eta_{t}^{2}}{2}\right)\left\Vert \nabla f(x_{t})\right\Vert ^{2}+\frac{L\eta_{t}^{2}}{2}\left\Vert \theta_{t}\right\Vert ^{2}+\left(L\eta_{t}^{2}-\eta_{t}\right)\left\langle \nabla f(x_{t}),\theta_{t}^{u}+\theta_{t}^{b}\right\rangle .
\end{align*}
Using Cauchy-Schwarz, we have $\left\langle \nabla f(x_{t}),\theta_{t}^{b}\right\rangle \le\frac{1}{2}\left\Vert \nabla f(x_{t})\right\Vert ^{2}+\frac{1}{2}\left\Vert \theta_{t}^{b}\right\Vert ^{2}$
thus, we derive,
\begin{align*}
\Delta_{t+1}-\Delta_{t} & \le-\left(\frac{2\eta_{t}-L\eta_{t}^{2}}{2}\right)\left\Vert \nabla f(x_{t})\right\Vert ^{2}+\frac{L\eta_{t}^{2}}{2}\left\Vert \theta_{t}\right\Vert ^{2}+\left(L\eta_{t}^{2}-\eta_{t}\right)\left\langle \nabla f(x_{t}),\theta_{t}^{u}\right\rangle \\
 & \quad+\frac{\eta_{t}-L\eta_{t}^{2}}{2}\left\Vert \nabla f(x_{t})\right\Vert ^{2}+\frac{\eta_{t}-L\eta_{t}^{2}}{2}\left\Vert \theta_{t}^{b}\right\Vert ^{2}\\
 & \le-\frac{\eta_{t}}{2}\left\Vert \nabla f(x_{t})\right\Vert ^{2}+\frac{L\eta_{t}^{2}}{2}\left\Vert \theta_{t}\right\Vert ^{2}+\left(L\eta_{t}^{2}-\eta_{t}\right)\left\langle \nabla f(x_{t}),\theta_{t}^{u}\right\rangle +\frac{\eta_{t}}{2}\left\Vert \theta_{t}^{b}\right\Vert ^{2}\\
 & \le-\frac{\eta_{t}}{2}\left\Vert \nabla f(x_{t})\right\Vert ^{2}+L\eta_{t}^{2}\left\Vert \theta_{t}^{u}\right\Vert ^{2}+\left(L\eta_{t}^{2}-\eta_{t}\right)\left\langle \nabla f(x_{t}),\theta_{t}^{u}\right\rangle +\left(L\eta_{t}^{2}+\frac{\eta_{t}}{2}\right)\left\Vert \theta_{t}^{b}\right\Vert ^{2}\\
 & \le-\frac{\eta_{t}}{2}\left\Vert \nabla f(x_{t})\right\Vert ^{2}+L\eta_{t}^{2}\left\Vert \theta_{t}^{u}\right\Vert ^{2}+\left(L\eta_{t}^{2}-\eta_{t}\right)\left\langle \nabla f(x_{t}),\theta_{t}^{u}\right\rangle +\frac{3\eta_{t}}{2}\left\Vert \theta_{t}^{b}\right\Vert ^{2}
\end{align*}
Rearranging, we obtain the lemma.

\end{proof}

\begin{proof}[Proof of Lemma \ref{lem:parameter-property-nonconvex}]
First, since $\eta=\frac{\sqrt{\Delta_{1}}T^{\frac{1-p}{3p-2}}}{8\lambda\sqrt{L}\ln\frac{4T}{\delta}}$,
$p>1$ and $\lambda\ge\left(\frac{8\ln\frac{4T}{\delta}}{\sqrt{L\Delta_{1}}}\right)^{\frac{1}{p-1}}T^{\frac{1}{3p-2}}\sigma^{\frac{p}{p-1}}$
\begin{align*}
\eta\lambda^{p} & =\frac{\sqrt{\Delta_{1}}T^{\frac{1-p}{3p-2}}}{8\sqrt{L}\ln\frac{4T}{\delta}}\lambda^{p-1}\\
 & \ge\frac{\sqrt{\Delta_{1}}T^{\frac{1-p}{3p-2}}}{8\sqrt{L}\ln\frac{4T}{\delta}}\frac{8\ln\frac{4T}{\delta}}{\sqrt{L\Delta_{1}}}T^{\frac{p-1}{3p-2}}\sigma^{p}\\
 & =\frac{\sigma^{p}}{L}
\end{align*}
which gives 
\begin{align*}
\frac{1}{L}\left(\frac{\sigma}{\lambda}\right)^{p} & \le\eta.
\end{align*}

Second, from the above, and $\lambda\ge4\sqrt{L\Delta_{1}}$ we have
\begin{align*}
\eta & =\frac{\sqrt{\Delta_{1}}}{8\sqrt{L}\ln\frac{4T}{\delta}\lambda}\le\frac{1}{32L\ln\frac{4T}{\delta}}\le\frac{1}{L}.
\end{align*}

Third, we have $\lambda\ge32^{1/p}\sigma T^{\frac{1}{3p-2}}$ hence
\begin{align*}
\left(\frac{\sigma}{\lambda}\right)^{p}T^{\frac{p}{3p-2}} & \le\frac{1}{32}.
\end{align*}

Finally, we have
\begin{align*}
TL\left(\frac{\sigma}{\lambda}\right)^{p}\lambda^{2}\eta^{2} & =TL\left(\frac{\sigma}{\lambda}\right)^{p}\left(\frac{\sqrt{\Delta_{1}}T^{\frac{1-p}{3p-2}}}{8\sqrt{L}\ln\frac{4T}{\delta}}\right)^{2}\\
 & =TL\left(\frac{\sigma}{\lambda}\right)^{p}T^{\frac{2-2p}{3p-2}}\frac{\Delta_{1}}{64L\ln^{2}\frac{4T}{\delta}}\\
 & =\left(\frac{\sigma}{\lambda}\right)^{p}T^{\frac{p}{3p-2}}\frac{\Delta_{1}}{64\ln^{2}\frac{4T}{\delta}}\\
 & \leq\frac{1}{32}\frac{\Delta_{1}}{64\ln^{2}\frac{4T}{\delta}}\\
 & \leq\frac{\Delta_{1}}{2048}\tag{ since \ensuremath{T>1} and \ensuremath{\delta<1}}.
\end{align*}
\end{proof}

\section{Property of smooth functions}
\begin{lem}
\label{lem:smooth-prop}For any $L$-smooth function $f$ such that
$f^{*}\coloneqq\inf_{x\in\R^{d}}f(x)>-\infty$, we have
\begin{align*}
\norm{\nabla f(x)}^{2} & \leq2L\left[f(x)-f^{*}\right].
\end{align*}
 
\end{lem}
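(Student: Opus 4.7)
The plan is to apply the standard descent lemma for $L$-smooth functions at a carefully chosen point that makes the quadratic upper bound tight, then use the global lower bound $f^*$ to close the argument. First I would invoke the $L$-smoothness inequality from Assumption (3), namely $f(y) \leq f(x) + \langle \nabla f(x), y - x\rangle + \frac{L}{2}\|y - x\|^2$, which holds for arbitrary $y \in \mathbb{R}^d$.

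The key step is to take $y = x - \frac{1}{L}\nabla f(x)$, which is precisely the minimizer of the right-hand side viewed as a quadratic in $y$. Substituting and simplifying produces $f(y) \leq f(x) - \frac{1}{2L}\|\nabla f(x)\|^2$, a bound that makes concrete the intuition that a single gradient step from $x$ must decrease $f$ by at least $\frac{1}{2L}\|\nabla f(x)\|^2$.

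Finally, since $f^* \leq f(y)$ by the definition of the infimum (and because $f$ is assumed to be bounded below, so $f^*$ is finite), chaining this with the displayed inequality and rearranging yields $\|\nabla f(x)\|^2 \leq 2L[f(x) - f^*]$, which is exactly the claimed bound. There is no real obstacle here: the only non-routine observation is recognizing the one-step gradient-descent iterate as the minimizer of the smoothness upper bound, after which the estimate is a short direct calculation, and we never use anything beyond smoothness and the finiteness of $f^*$.
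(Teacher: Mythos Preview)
Your proposal is correct and follows essentially the same argument as the paper: both apply the smoothness upper bound, substitute the gradient-descent step $y = x - \frac{1}{L}\nabla f(x)$ to obtain $f(y) \leq f(x) - \frac{1}{2L}\|\nabla f(x)\|^{2}$, and then use $f^{*} \leq f(y)$ to conclude.
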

\begin{proof}
By smoothness and lower bound of $f$, we have for all $x,y$
\[
f^{*}\leq f(y)\leq f(x)+\left\langle \nabla f(x),y-x\right\rangle +\frac{L}{2}\norm{y-x}^{2}.
\]
Pick $y=x-\frac{1}{L}\nabla f(x)$, we have 
\begin{align*}
f^{*} & \leq f(x)-\frac{1}{L}\norm{\nabla f(x)}^{2}+\frac{1}{2L}\norm{\nabla f(x)}^{2}\\
 & =f(x)-\frac{1}{2L}\norm{\nabla f(x)}^{2}\\
\implies\norm{\nabla f(x)}^{2} & \leq2L\left[f(x)-f^{*}\right].
\end{align*}
\end{proof}

\end{document}